\newtheorem{theorem}{Theorem}[section]
\newtheorem*{theorem*}{Theorem}
\newtheorem{definition}[theorem]{Definition}
\newtheorem{proposition}[theorem]{Proposition}
  \newcommand\g{{\mathfrak{g}}}
\newcommand\h{{\mathfrak{h}}}
\newcommand\m{{\mathfrak{m}}}
\begin{document}

\title{Cyclic Lorentzian Lie Groups}
\author[G.~Calvaruso]{Giovanni Calvaruso}
\address{Dipartimento di Matematica e Fisica \lq\lq E. De Giorgi\rq\rq \\
Universit\`a del Salento \\ Prov. Lecce-Arnesano \\
73100,  Lecce \\ Italy.}
\email{giovanni.calvaruso@unisalento.it}
\thanks{Giovanni Calvaruso has been partially supported by funds of the University of Salento and MURST (PRIN). Both authors have been partially supported by MINECO (Spain) under grant
MTM2014-53201-P}

\author[M.~Castrill\'{o}n L\'{o}pez]{M. Castrill\'{o}n L\'{o}pez}
\address{ICMAT (CSIC-UAM-UC3M-UCM)\\
Departmento de Geometr\'\i a y Topolog\'\i a \\
Universidad Complutense de Madrid \\ 28040 Madrid \\ Spain.}
\email{mcastri@mat.ucm.es}

\date{}

\subjclass[2010]{53C30, 53C50, 22E25, 22E46.}
\keywords{Lorentzian Lie groups, left-invariant cyclic metrics, homogeneous pseudo-Riemannian structures.}

\begin{abstract}
We consider Lie groups equipped with a left-invariant cyclic
Lorentzian metric. As in the Riemannian case, in terms of
homogeneous structures, such metrics can be considered as
different as possible from bi-invariant metrics. We show that
several results concerning cyclic Riemannian metrics do not extend
to their Lorentzian analogues, and obtain a full classification of
three- and four-dimensional cyclic Lorentzian metrics.
\end{abstract}

\maketitle

\section{Introduction}

Homogeneous Riemannian manifolds were characterized in terms of
homogeneous structures by Ambrose and Singer \cite{AS} (see also
\cite{TV}). Gadea and Oubi\~na \cite{GO} introduced {\em
homogeneous pseudo-Riemannian structures}, to give a corresponding
characterization of reductive homogeneous pseudo-Riemannian
manifolds. Let $G$ denote a (connected) Lie group and $\g$ its Lie
algebra. It is well known that left-invariant pseudo-Riemannian
metrics $g$ on $G$ are in a one-to-one correspondence with
nondegenerate inner products on $\g$, which we shall denote again
by $g$. If $g$ is such an inner product on $\g$ and $\nabla$
denotes its Levi-Civita connection, then tensor $S_x y=\nabla_x y,
\ x,y\in \g,$ is a homogeneous pseudo-Riemannian structure.
Conversely, among homogeneous pseudo-Riemannian manifolds,
pseudo-Riemannian Lie groups are characterized by the fact that
they admit a global pseudo-orthonormal frame field $\{e_i \}$,
such that $S_{e_i} e_j=\nabla _{e_i} e_j$ defines a homogeneous
pseudo-Riemannian structure (see for example \cite{C1}).

A systematic study of left-invariant Riemannian cyclic metrics
started in \cite{GGO}, with particular regard to the semi-simple
and solvable cases and a complete classification of the examples
of dimension up to five. Following \cite{GGO}, a left-invariant
pseudo-Riemannian metric $g$ is said to be {\em cyclic} if the
homogeneous pseudo-Riemannian structure $S$ described above falls
within $\mathcal{S}_1 \oplus \mathcal{S}_2$ in Tricerri-Vanhecke's
classification of homogeneous structures. Explicitly, this means
that
\begin{equation}\label{cyclic}
\mathfrak{S}_{x,y,z} g([x,y],z)=0 \quad \text{for all} \; x,y,z
\in \g ,
\end{equation}
where $\mathfrak{S}$ stands for the cyclic sum. Note that, as
bi-invariant metrics are characterized by condition $S \in
\mathcal{S}_3$, cyclic metric can be considered as different as
possible from the bi-invariant ones.

In this paper, we undertake the investigation of left-invariant
cyclic pseudo-Riemannian metrics, starting from the Lorentzian
ones. Although four-dimensional connected, simply connected
Lorentzian Lie groups coincide with the Riemannian ones, their
geometry proves to be richer, also with regard to cyclic metrics.
We shall classify cyclic Lorentzian Lie groups of dimension up to
four and show that several rigidity results valid for Riemannian
cyclic metrics do not extend to pseudo-Riemannian settings. In
particular, differently from the Riemannian case, we show the
existence of compact or nilpotent non-abelian cyclic Lorentzian
Lie groups.

The paper is organized in the following way. In Section~2 we shall
report some basic information concerning homogeneous structures
and  cyclic metrics. In Sections~3 and 4 we shall give the
complete classification of left-invariant cyclic Lorentzian
metrics in dimension three and four, respectively. In particular,
Theorems~\ref{cyclic3D}, \ref{hRie}, \ref{hLore} and \ref{hDeg}
below show that contrarily to the Riemannian case, all possible
connected and simply connected three- and  four-dimensional Lie
groups admit an appropriately chosen left-invariant Lorentzian
cyclic metric. We conclude in Section~5 with the classification of
cotorsionless Lorentzian three-manifolds, and some observations,
concerning in particular the link between three- and
four-dimensional cyclic Lie groups, and the obstruction to the
construction of non-symmetric solvmanifolds from solvable cyclic
groups.

\section{Preliminaries}
\setcounter{equation}{0}


Let $M$ be a connected manifold and $g$ a pseudo-Riemannian metric
on $M$. We denote by $\nabla$ the Levi-Civita connection of
$(M,g)$ and by $R$ its curvature tensor. The following definition
was introduced by Gadea and Oubi\~na:

\begin{definition}\label{dd}\cite{GO}
A homogeneous pseudo-Riemannian structure  {\em on $(M,g)$ is a
tensor field $S$ of type $(1,2)$ on $M$, such that the connection
$\tilde{\nabla} = \nabla -S$ satisfies}
$$ 
\tilde{\nabla} g=0, \qquad \tilde{\nabla} R=0, \qquad \tilde{\nabla} S =0.
$$ 
\end{definition}

\noindent The geometric meaning of the existence of a homogeneous
pseudo-Rieman\-nian structure is explained by the following
result.

 \begin{theorem}\label{GO}\cite{GO}
Let $(M,g)$ be a connected, simply connected and complete
pseudo-Riemannian manifold. Then, $(M,g)$ admits a
pseudo-Riemannian structure if and only if it is a reductive
homogeneous pseudo-Riemannian manifold.
 \end{theorem}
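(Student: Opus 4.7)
The plan is to follow the classical Ambrose--Singer strategy, suitably adapted to the pseudo-Riemannian setting where positive-definiteness cannot be used. For the easier direction, suppose $(M,g)=G/H$ is reductive with decomposition $\g=\h\oplus\m$. The canonical (Nomizu) connection $\tilde{\nabla}$ associated with this decomposition is $G$-invariant, hence its torsion $\tilde{T}$ and curvature $\tilde{R}$ are $\tilde{\nabla}$-parallel, and $\tilde{\nabla}g=0$ because $g$ comes from a fixed inner product on $\m$. Setting $S:=\nabla-\tilde{\nabla}$, a direct Koszul-type computation expresses $S$ in terms of $\tilde{T}$ and $g$, from which $\tilde{\nabla}S=0$ follows together with the remaining conditions of Definition~\ref{dd}.

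The substantive direction is the converse. Given $S$ satisfying the three conditions, put $\tilde{\nabla}:=\nabla-S$. Then $\tilde{\nabla}$ is a metric connection with $\tilde{\nabla}\tilde{T}=0$, $\tilde{\nabla}\tilde{R}=0$, since $\tilde{T}(x,y)=-S_xy+S_yx$ and the curvature of $\tilde{\nabla}$ is built algebraically from $R$ and $S$, both of which are $\tilde{\nabla}$-parallel. The next step is to build an infinitesimal model at a basepoint $p\in M$: let $\h\subset\mathfrak{so}(T_pM,g_p)$ be the Lie subalgebra generated by the curvature endomorphisms $\{\tilde{R}_p(x,y):x,y\in T_pM\}$, and define $\g:=\h\oplus T_pM$ with bracket extending the action of $\h$ on $T_pM$ by $[x,y]:=-\tilde{T}_p(x,y)+\tilde{R}_p(x,y)$ for $x,y\in T_pM$. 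The Jacobi identity for this bracket follows from the first and second Bianchi identities for $\tilde{\nabla}$ applied together with $\tilde{\nabla}\tilde{T}=\tilde{\nabla}\tilde{R}=0$; metric compatibility of $\tilde{\nabla}$ ensures $\h\subset\mathfrak{so}(T_pM,g_p)$.

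Integrate $\g$ to a simply connected Lie group $\tilde{G}$ with analytic subgroup $\tilde{H}$ corresponding to $\h$. One then shows that $\tilde{G}$ acts transitively on $M$ by $\tilde{\nabla}$-affine isometries, using the fact that $\tilde{\nabla}$ has parallel torsion and curvature together with completeness of $g$ (which, through the Hopf-Rinow-type statement for metric connections with parallel torsion, yields completeness of $\tilde{\nabla}$ as well). The map $\tilde{G}/\tilde{H}\to M$ is then a covering, hence a diffeomorphism by simple connectedness of $M$, and the decomposition $\g=\h\oplus T_pM$ gives the required $\ad(\tilde{H})$-invariant complement, proving reductive homogeneity.

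The main obstacle is precisely this integration step: in Riemannian signature one can invoke compactness of the holonomy representation to produce the transitive action, but in indefinite signature the holonomy is in general non-compact, so one has to control globally the flow of the fundamental vector fields produced infinitesimally. This is exactly where the hypotheses of completeness and simple connectedness become indispensable, and where a direct transplant of Ambrose--Singer requires the most care.
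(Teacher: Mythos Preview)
The paper does not give its own proof of this theorem: it is stated with the citation \cite{GO} and no proof environment follows. So there is nothing in the paper to compare your argument against; the authors are simply quoting the Gadea--Oubi\~na result as background.

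That said, your sketch is essentially the strategy of \cite{GO} (which in turn adapts Ambrose--Singer and Tricerri--Vanhecke to indefinite signature): build the Nomizu infinitesimal model from the $\tilde{\nabla}$-parallel torsion and curvature, integrate to a simply connected group, and use completeness plus simple connectedness to globalize. One point to tighten: your appeal to a ``Hopf--Rinow-type statement for metric connections with parallel torsion'' to pass from completeness of $g$ to completeness of $\tilde{\nabla}$ is not available in indefinite signature, since Hopf--Rinow itself fails there. In \cite{GO} (and already in \cite{TV} for the Riemannian case) the globalization goes through the development map and the Cartan--Ambrose--Hicks machinery rather than a metric-completeness argument; completeness of the Levi-Civita connection is what allows the development to be defined along all curves, and simple connectedness then makes the resulting local isometries patch into a transitive action. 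You correctly flag this as the delicate step, but the mechanism you name for it is not the right one.
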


\noindent Observe that if any of the hypotheses of connectedness,
simple connectedness or completeness is missing, the existence of
a homogeneous structure characterizes local homogeneity of the
manifold. We remark that, while any homogeneous Riemannian
manifold is reductive, a homogeneous pseudo-Riemannian manifold
needs not be reductive. This restriction also happens when
considering local homogeneity, although a precise definition of
local reductivity is required in this context (see \cite{Lu}).
Definition~\ref{dd} and Theorem~{\ref{GO} above extend the
characterization of homogeneous Riemannian manifolds by means of
homogeneous structures \cite{AS} to {\em reductive} homogeneous
pseudo-Riemannian manifolds.

We explicitly recall that for the reductive homogeneous
pseudo-Rieman\-nian manifold $(M=G/H,g)$, with reductive
decomposition $\g=\h \oplus \m$,  the linear connection
$\tilde\nabla=\nabla-T$ is the canonical connection associated to
the reductive decomposition \cite{TV}.

Let $V$ denote an $n$-dimensional real vector space, equipped with
a non-degenerate inner product $\langle , \rangle$ of signature
$(k,n-k)$. It is the model space for the tangent space at each
point of a homogeneous pseudo-Riemannian manifold $(M,g)$. Let
$\mathcal{S} (V)$ denote the vector space of $(0,3)$-tensors $S$
on $V$, satisfying the same condition as the first equation
$\tilde{\nabla}g=0$ of a homogenous structure, that is,
$$
\mathcal{S} (V) = \left\{S \in \bigotimes ^3 V^* :
S_{xyz}=-S_{xzy}, \; x,y,z \in V \right\},
$$
where $S_{xyz}:=\langle S_x y,z\rangle$. Then, $\langle,\rangle$
induces an inner product on  $\mathcal{S} (V)$, given by
$$
\langle S, S'\rangle= \sum_{i,j,k=1}^n \varepsilon_i \varepsilon_j
\varepsilon_k S_{e_i e_j e_k} S'_{e_i e_j e_k},
$$
where $\{e_i \}$ denotes a pseudo-orthonormal basis of $V$ and
$\varepsilon_i=\langle e_i,e_i\rangle$  for all indices $i$. The
following result was proved \cite{GO2}.

\begin{theorem}\label{S-i}\cite{GO2}
If $\dim V \geq 3$, then $\mathcal{S} (V)$ decomposes into the orthogonal direct sum
$$\mathcal{S} (V)=\mathcal{S}_1 (V)\oplus\mathcal{S}_2 (V)\oplus\mathcal{S}_3 (V),$$
where
$$\begin{array}{l}
\mathcal{S}_1 (V)=\left\{S \in \mathcal{S}(V): S_{xyz}=\langle x,y\rangle \omega(z)-\langle x,z\rangle \omega(y), \; \omega \in V^*  \right\}, \\[6pt]
\mathcal{S}_2 (V)=\left\{S \in \mathcal{S} (V): \mathfrak{S}_{xyz} S_{xyz}=0, \; c_{12}(S):=\sum_{i=1}^n \varepsilon_i S_{e_i e_i \cdot}=0  \right\}, \\[6pt]
\mathcal{S}_3 (V)=\left\{S \in \mathcal{S} (V): S_{xyz}+S_{yxz}=0 \right\}
\end{array}$$
are invariant and irreducible under the action of $O(k,n-k)$. If $\dim V=2$, then $\mathcal{S}(V)=\mathcal{S}_1(V)$. Furthermore,
$$\begin{array}{l}
\mathcal{S}_1 (V) \oplus \mathcal{S}_2 (V)=\left\{S \in \mathcal{S}(V): \mathfrak{S}_{xyz} S_{xyz}=0  \right\}, \\[6pt]
\mathcal{S}_2 (V) \oplus \mathcal{S}_3 (V)=\left\{S \in \mathcal{S} (V):  c_{12}(S)=0  \right\}, \\[6pt]
\mathcal{S}_1 (V)\oplus \mathcal{S}_3 (V)=\left\{S \in \mathcal{S}
(V): \begin{array}{l}S_{xyz}+S_{yxz}=2\langle x,y\rangle \omega(z)
\\-\langle x,z\rangle \omega(y)-\langle y,z\rangle \omega(x)
\end{array} ,\; \omega \in V^*  \right\} .
\end{array}$$
\end{theorem}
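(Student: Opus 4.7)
The plan is to establish the decomposition in four steps in sequence: (i) verify $O(k,n-k)$-invariance of each $\mathcal{S}_i(V)$, (ii) check mutual orthogonality under the induced inner product, (iii) construct explicit projections showing that the $\mathcal{S}_i(V)$ span $\mathcal{S}(V)$, and (iv) prove irreducibility. The pairwise sum formulas and the case $\dim V=2$ will then follow. Invariance is immediate, since each $\mathcal{S}_i(V)$ is cut out by a tensorial identity involving $\langle\cdot,\cdot\rangle$ and the $\varepsilon$-weighted trace $c_{12}$, both $O(k,n-k)$-equivariant objects.

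For orthogonality and exhaustiveness the key observation is that $\mathcal{S}_3(V)$ coincides with $\Lambda^3V^*$: skewness in the last two slots combined with the defining skewness in the first two slots forces cyclic invariance plus transposition skewness, hence total skewness. Moreover, from $\mathcal{S}(V)$-skewness the total antisymmetrization simplifies to $\mathrm{Alt}(S)=\tfrac{1}{3}\mathfrak{S}S$, so $\mathfrak{S}S=0$ is equivalent to $\mathrm{Alt}(S)=0$. Using this, the three pairings vanish by short computations in a pseudo-orthonormal basis: $\mathcal{S}_1\perp\mathcal{S}_3$ and $\mathcal{S}_1\perp\mathcal{S}_2$ reduce to the identities $S'_{e_ie_je_j}=0$, $S'_{e_ie_ie_k}=0$, or $c_{12}(S')=0$, while $\mathcal{S}_2\perp\mathcal{S}_3$ follows because a tensor with $\mathrm{Alt}=0$ is orthogonal to every alternating tensor. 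Exhaustiveness is obtained from the explicit projection
\[
\omega:=\tfrac{1}{n-1}\,c_{12}(S),\qquad S^{(1)}_{xyz}:=\langle x,y\rangle\omega(z)-\langle x,z\rangle\omega(y),
\]
after verifying that $c_{12}(S^{(1)})=(n-1)\omega$. Setting $S^{(3)}$ equal to the total antisymmetrization of $S-S^{(1)}$ and $S^{(2)}:=S-S^{(1)}-S^{(3)}$ produces tensors in $\mathcal{S}_3(V)$ and $\mathcal{S}_2(V)$ respectively, the latter having vanishing trace and vanishing cyclic sum by construction.

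The main obstacle is the irreducibility of the summands. The map $\omega\mapsto S^{(1)}$ and the identification $\mathcal{S}_3(V)\cong\Lambda^3V^*$ turn $\mathcal{S}_1(V)$ and $\mathcal{S}_3(V)$ into the standard $O(k,n-k)$-irreducibles $V^*$ and $\Lambda^3V^*$ for $\dim V\geq 3$. The irreducibility of $\mathcal{S}_2(V)$ is more delicate: this module corresponds to the Young symmetrizer of hook shape $(2,1)$ for $\mathrm{GL}(V)$, known to remain irreducible when restricted to $O(n,\mathbb{C})$, and irreducibility over $\mathbb{R}$ for $O(k,n-k)$ then follows by passing through complexification, as in \cite{GO2}. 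With the decomposition in hand, the three pairwise characterizations are straightforward consequences: $\mathfrak{S}$ vanishes on $\mathcal{S}_1\oplus\mathcal{S}_2$ and is injective on $\mathcal{S}_3$ (since $\mathfrak{S}S=3S$ for totally skew $S$); $c_{12}$ vanishes on $\mathcal{S}_2\oplus\mathcal{S}_3$ and is injective on $\mathcal{S}_1$; and the symmetrization identity of $\mathcal{S}_1\oplus\mathcal{S}_3$ is checked on generators of each factor, with the converse following because a tensor satisfying it has vanishing $\mathcal{S}_2$-component (that component would be forced skew in the first two slots, hence in $\mathcal{S}_2\cap\mathcal{S}_3=\{0\}$). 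When $\dim V=2$ one has $\Lambda^3V^*=0$, and a dimension count ($\dim\mathcal{S}(V)=\dim V\cdot\dim\Lambda^2V^*=2=\dim V^*$) forces $\mathcal{S}(V)=\mathcal{S}_1(V)$.
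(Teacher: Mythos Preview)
The paper does not prove this theorem: it is quoted verbatim from \cite{GO2} (Gadea--Oubi\~na), with the introductory sentence ``The following result was proved \cite{GO2}.'' There is therefore no proof in the present paper to compare your proposal against.

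On its own merits, your outline is sound and follows the standard Tricerri--Vanhecke strategy (as adapted to the pseudo-Riemannian setting in \cite{GO2}): invariance, orthogonality, explicit projections, irreducibility, and then the pairwise descriptions. One imprecision is worth flagging. You write that $\mathcal{S}_2(V)$ ``corresponds to the Young symmetrizer of hook shape $(2,1)$ for $\mathrm{GL}(V)$, known to remain irreducible when restricted to $O(n,\mathbb{C})$.'' That is not quite right: the full $\mathrm{GL}$-module of shape $(2,1)$ inside $V^*\otimes\Lambda^2V^*$ does \emph{not} stay irreducible under the orthogonal group; it splits off a copy of $V^*$ via the trace $c_{12}$, and that trace part is exactly $\mathcal{S}_1(V)$. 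What is irreducible under $O(k,n-k)$ is the \emph{traceless} $(2,1)$-piece, and that is $\mathcal{S}_2(V)$. So the statement you need is that the harmonic (trace-free) Weyl module of shape $(2,1)$ is $O$-irreducible, which is the content of the relevant argument in \cite{GO2} (or, in the Riemannian case, \cite{TV}). With that correction your sketch goes through.
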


As proved in \cite{GO2}, {\em naturally reductive} homogeneous
pseudo-Riemannian manifolds are all and the ones admitting a
homogeneous structure $S \in \mathcal{S}_3 (V)$, while {\em
cotorsionless manifolds} are characterized by the existence of
homogeneous structures $S \in \mathcal{S}_1 (V) \oplus
\mathcal{S}_2 (V)$.

Among homogeneous pseudo-Riemannian manifolds, pseudo-Riemannian
Lie groups are characterized by the existence of a special
homogeneous pseudo-Riemannian structure (see also \cite{C1}). In
fact, when $(M=G,g)$ is a Lie group equipped with a left-invariant
Lorentzian metric $g$, uniquely determined at the algebraic level
by a non-degenerate inner product $g$ on the Lie algebra $\g$,
tensor  $S_x y=\nabla_x y, \ x,y\in \g,$ defines a homogeneous
pseudo-Riemannian structure. In this case $\tilde\nabla$, which
vanishes when evaluated on left invariant vector fields, is the
so-called $(-)$-{\em connection of Cartan-Schouten}, whose
curvature and torsion are respectively given by $\tilde R=0$ and
$\tilde T (X,Y)=-[X,Y]$.

It is well known that the left-invariant pseudo-Riemannian metric
corresponding to $g$ is {\em bi-invariant} if and only if the
above special homogeneous structure $S$ belongs to $\mathcal{S}_3
(V)$. On the other hand, $g$ is called {\em cyclic} when $S \in
\mathcal{S}_1 (V) \oplus \mathcal{S}_2 (V)$. Thus, taking into
account the orthogonal decomposition of $\mathcal{S}(V)$,
left-invariant cyclic metrics can be considered \lq\lq as far away
as possible\rq\rq \ from the bi-invariant ones.

We report below several strong rigidity results obtained in
\cite{GGO} for {\em Riemannian} cyclic metrics. As a consequence
of the classifications given in the next sections, we shall see
that most of these result do not hold any more for Lorentzian
cyclic metrics.

\begin{proposition}\cite{GGO}\label{p1}
A connected cyclic Riemannian Lie group is flat if and only if it
is abelian. Moreover, let $G$ be a non-abelian cyclic Riemannian
Lie group.
\begin{itemize}
\item[(i)] If $G$ is solvable, then it has strictly negative
scalar curvature. \item[(ii)] If $G$ is unimodular, then it has
positive sectional curvatures. If moreover it is solvable, then it
has both positive and negative curvatures. \item[(iii)] If $G$ is
not unimodular there exist negative sectional curvatures.
\end{itemize}
\end{proposition}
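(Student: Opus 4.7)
The plan is to work in an orthonormal basis $\{e_i\}$ of $(\g,g)$ with structure constants $c_{ijk}:=g([e_i,e_j],e_k)$, in which (\ref{cyclic}) becomes the clean component identity $c_{ijk}+c_{jki}+c_{kij}=0$. Inserting this into the Koszul formula collapses the Levi-Civita connection to
\begin{equation*}
g(\nabla_x y,z)=g([x,y],z)-g([x,z],y),\qquad\text{equivalently}\qquad\nabla_x y=\ad_x y-(\ad_x)^{*}y,
\end{equation*}
so that each $\nabla_x$ is twice the skew part of $\ad_x$. This simplified form of $\nabla$ is the launching point for all the curvature computations.

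For the flatness claim, I would invoke Milnor's theorem that a flat left-invariant Riemannian metric forces an orthogonal decomposition $\g=\mathfrak u\oplus\mathfrak b$ with $\mathfrak u$ an abelian subalgebra, $\mathfrak b$ an abelian ideal, and $\ad_x|_{\mathfrak b}$ skew-adjoint for every $x\in\mathfrak u$. Evaluating (\ref{cyclic}) on $x\in\mathfrak u$ and $y,z\in\mathfrak b$, orthogonality of the decomposition kills $g([y,z],x)$, while skew-adjointness of $\ad_x$ identifies $g([z,x],y)$ with $g([x,y],z)$; the cyclic sum therefore reduces to $2g([x,y],z)=0$, forcing $\ad_x|_{\mathfrak b}=0$ for every $x\in\mathfrak u$ and hence making $\g$ abelian. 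The converse is trivial.

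For item (i), using the simplified $\nabla$ to compute Ricci and tracing produces a scalar-curvature expression of the shape $\mathrm{scal}=-\|H\|^{2}+Q(c_{ijk})$, with $H$ the mean curvature vector $g(H,x)=\mathrm{tr}(\ad_x)$ and $Q$ a quadratic form in the structure constants. The cyclic identity, applied to substitute $c_{kij}=-c_{ijk}-c_{jki}$ in the indefinite cross terms of $Q$ and regroup by symmetrisation of indices, collapses $Q$ into a manifestly non-positive multiple of $\sum c_{ijk}^{2}$; solvability enters to rule out the cancellations that can occur in the semisimple case, via Lie's theorem and the fact that $H\in[\g,\g]^{\perp}$. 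Non-abelianness forces some $c_{ijk}\ne 0$, yielding $\mathrm{scal}<0$. For items (ii) and (iii) the same substitution expresses each sectional curvature $K(e_i,e_j)$ as an explicit function of the $c_{ijk}$ and $H$: in the unimodular case $H=0$, and an average of $K(e_i,e_j)$ over appropriate index pairs becomes, via (\ref{cyclic}), a positive multiple of $\sum c_{ijk}^{2}$, producing some $K(e_i,e_j)>0$; combining with (i) then yields the additional negative sectional curvatures in the solvable unimodular case. When $G$ is non-unimodular, $H\ne 0$ and the plane spanned by $H$ and a suitable eigendirection of the symmetric part of $\ad_H$ is shown by direct evaluation to have $K<0$.

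The hard part will be the combinatorial rearrangement in item (i): finding the precise index permutation and sign bookkeeping that turn the indefinite cross terms in the scalar curvature into a manifest definite-sign sum of squares after (\ref{cyclic}) is inserted, and then confirming that solvability is exactly what excludes the potentially positive cross contributions of the semisimple case. Once that rearrangement is in place, items (ii) and (iii) reduce to analogous but more localised sign checks on the explicit quadratic formulas for $K(e_i,e_j)$.
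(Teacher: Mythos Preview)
The paper does not contain a proof of this proposition: it is quoted verbatim from \cite{GGO} as background, with the explicit preamble ``We report below several strong rigidity results obtained in \cite{GGO}\dots''. There is therefore no proof in the present paper to compare your attempt against.

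On the substance of your outline: the simplification of the Koszul formula under the cyclic condition to $\nabla_x=\ad_x-(\ad_x)^{*}$ is correct and is indeed the natural starting point, and your argument for the flatness claim via Milnor's structure theorem is sound. However, items (i)--(iii) remain at the level of a programme rather than a proof: you yourself flag that the key combinatorial rearrangement turning the scalar curvature into a manifestly non-positive expression is still to be found, and the role of solvability (``rules out the cancellations that can occur in the semisimple case, via Lie's theorem'') is asserted but not made precise. If you wish to complete this, you should consult \cite{GGO} directly, where the full argument is carried out.
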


\begin{theorem}\cite{GGO}\label{p2}
Every non-abelian cyclic Riemannian Lie group is not compact.
\end{theorem}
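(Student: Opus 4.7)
The plan is to exploit the fact that every compact connected Lie group admits a bi-invariant Riemannian metric, and to reduce the cyclic condition to a linear-algebra constraint whose only positive-definite solution forces the Lie bracket to vanish.

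Suppose, for contradiction, that $G$ is a compact connected non-abelian Lie group carrying a left-invariant cyclic Riemannian metric $g$. By compactness there exists a bi-invariant Riemannian inner product $g_{0}$ on $\g$, with respect to which each $\ad_{x}$ is $g_{0}$-skew and $g_{0}$ satisfies the cyclic invariance $g_{0}([a,b],c)=g_{0}(a,[b,c])=g_{0}([c,a],b)$. Writing $g(x,y)=g_{0}(Ax,y)$ uniquely determines a positive-definite $g_{0}$-symmetric operator $A\colon\g\to\g$ that measures how $g$ differs from $g_{0}$.

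The first main step is to translate the cyclic identity (\ref{cyclic}) into a condition on $A$ alone. Applying the $g_{0}$-symmetry of $A$ and then the ad-invariance of $g_{0}$ to each of the three cyclic summands $g([x,y],z)$, $g([y,z],x)$, $g([z,x],y)$ rewrites every one of them as a $g_{0}$-inner product in the $z$-slot. Summing and invoking non-degeneracy of $g_{0}$ yields the operator identity
\begin{equation*}
A[x,y] + [Ax,y] + [x,Ay] = 0 \qquad \text{for all } x,y\in\g.
\end{equation*}
As a sanity check, setting $A=\operatorname{Id}$ gives $3[x,y]=0$, so the bi-invariant-and-cyclic case is precisely the abelian one, consistent with $\mathcal{S}_{3}\cap(\mathcal{S}_{1}\oplus\mathcal{S}_{2})=0$.

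The concluding step is a short spectral argument. Diagonalize $A$ in a $g_{0}$-orthonormal basis; all eigenvalues are strictly positive. For eigenvectors $v,w$ with eigenvalues $\lambda,\mu>0$, the displayed identity yields
\begin{equation*}
A[v,w] = -(\lambda+\mu)[v,w].
\end{equation*}
Since $-(\lambda+\mu)<0$ cannot be an eigenvalue of the positive operator $A$, it follows that $[v,w]=0$. The eigenvectors of $A$ span $\g$, so $\g$ is abelian, contradicting the hypothesis.

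The only delicate part is the clean derivation of $A[x,y]+[Ax,y]+[x,Ay]=0$; the rest is purely linear algebra. Crucially, the argument uses positivity of $A$, i.e.\ the Riemannian signature of $g$; the failure of positivity for indefinite inner products (where $-(\lambda+\mu)$ can well lie in the spectrum of $A$) is precisely what allows the Lorentzian counterexamples constructed later in Sections~3 and 4.
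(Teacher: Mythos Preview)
The paper does not supply its own proof of this theorem; the statement is quoted verbatim as a result of \cite{GGO}, so there is no in-paper argument to compare your attempt against.

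Your proof is nonetheless correct and complete. The key translation step is sound: writing $g(\cdot,\cdot)=g_{0}(A\cdot,\cdot)$, each cyclic summand can be brought into the $z$-slot by first using the $g_{0}$-symmetry of $A$ and then the ad-invariance $g_{0}([a,b],c)=g_{0}(a,[b,c])$ of the bi-invariant metric, yielding $g_{0}(A[x,y]+[Ax,y]+[x,Ay],z)=0$ for all $z$ and hence the operator identity you state. The spectral argument then goes through without issue, since $A$ is diagonalizable (being $g_{0}$-symmetric) with strictly positive spectrum, and bilinearity of the bracket extends the vanishing from eigenvectors to all of $\g$. Your closing remark that the Riemannian signature enters precisely through the positivity of the eigenvalues---and that this is exactly what breaks down for the Lorentzian $SU(2)$ example in Theorem~\ref{cyclic3D}(c)---is both accurate and illuminating.
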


\begin{theorem}\cite{GGO}\label{p3}
The universal covering $\widetilde{SL}(2,\mathbb R)$ of
$SL(2,\mathbb R)$ is the only connected, simply connected simple real
Riemannian Lie group.
\end{theorem}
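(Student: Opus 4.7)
My plan is to reduce the statement twice to Theorem~\ref{p2}. Let $G$ be a connected, simply connected simple real Lie group carrying a left-invariant cyclic Riemannian metric $g$. Since $G$ is simple it is non-abelian, so Theorem~\ref{p2} immediately forces $G$ to be non-compact; this already rules out every compact simple real form.

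Next I would pass to the compact factor of the Cartan decomposition $\g=\mathfrak{k}\oplus\mathfrak{p}$. The subalgebra $\mathfrak{k}$ is of compact type, because the Killing form of $\g$ is negative definite on it, so there exists a compact Lie group $K$ with Lie algebra $\mathfrak{k}$ (take for instance the adjoint compact form, or the product of a torus and the simply connected compact group of the semisimple part). The key observation is that the cyclic identity~\eqref{cyclic} passes to every Lie subalgebra: restricting \eqref{cyclic} to $x,y,z\in\mathfrak{k}$ and using $[\mathfrak{k},\mathfrak{k}]\subseteq\mathfrak{k}$, the restriction $g|_{\mathfrak{k}}$ is a positive-definite cyclic inner product on $\mathfrak{k}$. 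Endowing $K$ with the induced left-invariant metric turns it into a compact cyclic Riemannian Lie group, so Theorem~\ref{p2} applied again forces $K$, and hence $\mathfrak{k}$, to be abelian.

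Finally I would invoke Cartan's classification of non-compact real simple Lie algebras: the maximal compact subalgebra $\mathfrak{k}$ is abelian only when $\g$ is one of the mutually isomorphic real forms $\mathfrak{sl}(2,\mathbb{R})\cong\mathfrak{su}(1,1)\cong\mathfrak{so}(2,1)\cong\mathfrak{sp}(1,\mathbb{R})$, for which $\mathfrak{k}=\mathfrak{so}(2)$. In every other case $\mathfrak{k}$ contains a non-abelian factor such as $\mathfrak{so}(n)$ with $n\geq 3$, $\mathfrak{su}(n)$ with $n\geq 2$ or $\mathfrak{sp}(n)$ with $n\geq 1$. Hence $\g\cong\mathfrak{sl}(2,\mathbb{R})$, and simple connectedness identifies $G$ with $\widetilde{SL}(2,\mathbb{R})$.

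The delicate point is the reduction from $G$ to $K$: since $G$ itself may have infinite center (as indeed $\widetilde{SL}(2,\mathbb{R})$ does), one cannot in general realise $K$ as a closed subgroup of $G$, and must instead construct $K$ abstractly from the fact that $\mathfrak{k}$ is of compact type. Once that subtlety is handled, the argument is a two-step cascade of Theorem~\ref{p2} followed by a routine lookup in Cartan's tables of real forms.
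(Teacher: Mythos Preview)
The paper does not contain a proof of Theorem~\ref{p3}: it is quoted verbatim from \cite{GGO} in the Preliminaries section, alongside Proposition~\ref{p1}, Theorem~\ref{p2} and Proposition~\ref{p4}, precisely as a known Riemannian rigidity result that the authors intend to contrast with the Lorentzian situation. There is therefore nothing in the present paper to compare your argument against.

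That said, your approach is sound as an independent proof. The two reductions via Theorem~\ref{p2} are legitimate: the cyclic identity \eqref{cyclic} does restrict to any Lie subalgebra, and $\mathfrak{k}$ is of compact type, so a compact connected $K$ with Lie algebra $\mathfrak{k}$ and left-invariant metric $g|_{\mathfrak{k}}$ exists. The conclusion that $\mathfrak{k}$ is abelian then pins down $\g\cong\mathfrak{sl}(2,\mathbb R)$ by the classification of non-compact real simple Lie algebras, exactly as you say. Two small points worth tightening: first, Theorem~\ref{p2} as stated applies to connected groups, so you should say explicitly that you take $K$ connected; second, your argument establishes only uniqueness, not existence. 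To complete the statement you still need to exhibit at least one cyclic left-invariant Riemannian metric on $\widetilde{SL}(2,\mathbb R)$, which follows for instance from the Milnor description \eqref{3DRie} with $a_1+a_2+a_3=0$ and signs $(+,+,-)$.
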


\begin{proposition}\cite{GGO}\label{p4}
Non-abelian nilpotent Lie groups do not admit left-in\-variant
Riemannian cyclic metrics.
\end{proposition}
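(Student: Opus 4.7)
The plan is to combine the cyclic identity with standard structural facts about nilpotent Lie algebras, using crucially the positive definiteness of the Riemannian inner product.

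First I would test the cyclic identity $\mathfrak{S}_{x,y,z}\,g([x,y],z)=0$ on vectors $z$ lying in the center $Z(\g)$. For such $z$ the brackets $[y,z]$ and $[z,x]$ vanish, so the identity collapses to
\begin{equation*}
g([x,y],z)=0\qquad\text{for all }x,y\in\g,\ z\in Z(\g).
\end{equation*}
This is the key algebraic output of the cyclic condition: it says exactly that $Z(\g)\subseteq[\g,\g]^{\perp}$ with respect to $g$.

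Next I would recall the basic structural fact about nilpotent Lie algebras: if $\g$ is nilpotent and non-abelian, the lower central series $\g^{1}=\g$, $\g^{k+1}=[\g,\g^{k}]$ eventually vanishes, and if $k\geq 2$ denotes the largest index with $\g^{k}\neq 0$, then
\begin{equation*}
0\neq \g^{k}\subseteq Z(\g)\cap[\g,\g].
\end{equation*}
Indeed $\g^{k}\subseteq[\g,\g]$ by definition, while $[\g,\g^{k}]=\g^{k+1}=0$ gives $\g^{k}\subseteq Z(\g)$.

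Combining the two observations: picking any nonzero $z\in\g^{k}\subseteq Z(\g)\cap[\g,\g]$, we have $g(z,w)=0$ for every $w\in[\g,\g]$ by the consequence of the cyclic identity, and in particular $g(z,z)=0$ since $z\in[\g,\g]$. Because $g$ is positive definite, this forces $z=0$, contradicting the choice of $z$. Hence $\g$ must be abelian. The main (and essentially only) obstacle is conceptual rather than computational: it is recognizing that the cyclic identity restricted to central elements produces the orthogonality $Z(\g)\perp[\g,\g]$, which is incompatible with the non-vanishing of $Z(\g)\cap[\g,\g]$ in the nilpotent non-abelian case. This also explains, looking ahead, why the argument cannot survive in the Lorentzian setting: in indefinite signature $g(z,z)=0$ no longer forces $z=0$, so the same $z$ can be isotropic and central and lie in $[\g,\g]$ without contradiction.
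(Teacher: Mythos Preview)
Your argument is correct. The cyclic identity evaluated with a central $z$ indeed collapses to $g([x,y],z)=0$, giving $Z(\g)\perp[\g,\g]$; combined with the fact that a non-abelian nilpotent Lie algebra has $Z(\g)\cap[\g,\g]\neq 0$, this forces an isotropic nonzero vector, impossible in the Riemannian case. Your closing remark on why the argument fails in Lorentzian signature is also on point and matches exactly the phenomenon the paper exploits later (e.g.\ case~(e) of Theorem~\ref{cyclic3D}).

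As for comparison: the paper does not give its own proof of this proposition; it merely quotes the result from \cite{GGO} as background. So there is nothing in the present paper to compare your approach against. Your self-contained argument is in fact more informative than what the paper records, and it would serve perfectly well as a proof here.
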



We end this section clarifying the relationship between Riemannian
and Lorentzian Lie groups.
Let $G$ be an $n$-dimensional connected Lie group and $\g$ its Lie algebra.
Left-invariant Lorentzian metrics on $G$ are in a one-to-one correspondence
with inner products on $\g$ of signature $(n-1,1)$. If $g$ is such a
Lorentzian inner product, then it exists a pseudo-orthonormal
basis $\{e_1,\dots,e_n\}$ of $\g$, with $e_n$ time-like. But then, $G$ also
admits a corresponding left-invariant Riemannian metric, completely determined
at the Lie algebra level by having $\{e_1,\dots,e_n\}$ as an orthonormal basis
of $\g$.

Conversely, given a positive definite inner product $\bar g$ over
$\g$, and  a $\bar g$-orthonormal basis $\{e_1,\dots,e_n\}$ of
$\g$, it suffices to change the causal character of one of vectors
in the basis, choosing it to be time-like, to determine a
left-invariant Lorentzian metric on $G$. Therefore, the following
result holds (see also \cite{CZ}).

\begin{proposition}\label{RieLo}
The class of $n$-dimensional connected, simply connected Lorentzian Lie
groups (respectively, Lorentzian Lie algebras) coincides with the
class of the Riemannian ones.
\end{proposition}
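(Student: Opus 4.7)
The plan is to observe that the statement really asserts an equivalence at the level of Lie algebras: a real Lie algebra $\g$ of dimension $n$ admits a Lorentzian inner product if and only if it admits a Riemannian one. Once this is established, the corresponding statement about connected, simply connected Lie groups follows from the standard Lie correspondence (each real finite-dimensional Lie algebra integrates to a unique connected, simply connected Lie group), and left-invariant (Riemannian or Lorentzian) metrics on $G$ are uniquely determined by the corresponding inner products on $\g$, as recalled earlier in the excerpt.

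For the algebraic equivalence, both implications are obtained by a basis-change argument, which is essentially spelled out in the paragraphs preceding the statement. First, given a Lorentzian inner product $g$ on $\g$, choose a pseudo-orthonormal basis $\{e_1,\dots,e_n\}$ with $\langle e_i,e_i\rangle=1$ for $i<n$ and $\langle e_n,e_n\rangle=-1$; then the bilinear form $\bar g$ defined by declaring this same basis to be orthonormal (with all $\bar g(e_i,e_i)=+1$) is a positive definite inner product on $\g$, hence corresponds to a left-invariant Riemannian metric on $G$. Conversely, given any positive definite inner product $\bar g$ on $\g$ (which always exists on a finite-dimensional real vector space), pick a $\bar g$-orthonormal basis $\{e_1,\dots,e_n\}$ and define $g$ by
\[
g(e_i,e_j)=\delta_{ij}\ \text{for}\ i,j<n,\qquad g(e_i,e_n)=0\ \text{for}\ i<n,\qquad g(e_n,e_n)=-1.
\]
Then $g$ is a nondegenerate symmetric bilinear form of signature $(n-1,1)$, so it yields a left-invariant Lorentzian metric on $G$.

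There is no real obstacle here: the argument is a pure vector-space construction, and the Lie bracket of $\g$ plays no role at all in changing the causal character of a basis vector. The only point worth noting is that the correspondence between left-invariant metrics and inner products on $\g$, together with the uniqueness of the simply connected integration, is precisely what allows us to upgrade the algebraic equivalence to the statement about Lie groups as formulated.
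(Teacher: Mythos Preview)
Your proposal is correct and follows essentially the same approach as the paper: the argument given in the paragraphs immediately preceding the proposition is precisely the basis-change construction you describe, flipping the sign of one diagonal entry to pass between Riemannian and Lorentzian inner products on $\g$. Your additional remark about the Lie correspondence for simply connected groups makes explicit a step the paper leaves implicit, but otherwise the two arguments coincide.
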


We explicitly observe that, although connected, simply connected Lorentz\-ian
Lie groups coincide with the Riemannian ones
(Proposition~\ref{RieLo}), the geometry of left-invariant
Lorentzian metrics is much richer than the one of their Riemannian
counterpart. The fundamental reason for such a difference is the
existence in Lorentzian settings of vectors with different causal
characters. Some consequences of this fact are:
\begin{itemize}
\item[$\bullet$]  that (contrarily to the Riemannian case) a
self-adjoint operator with respect to a Lorentzian metric needs
not be diagonalizable. For example, this yields four standard
forms of three-dimensional unimodular Lorentzian Lie groups
\cite{R}, while just one form occurs in Riemannian settings
\cite{M};

\item[$\bullet$]  that every subspace of a vector space endowed
with a positive definite inner product, inherits a positive inner
product, while a subspace of a Lorentzian vector space inherits an
inner product that can be either positive definite, Lorentzian, or
even degenerate. In particular, this fact yields the differences
in the classifications of three-dimensional non-unimodular
Lorentzian \cite{CP} and Riemannian \cite{M} Lie groups, and of
left-invariant Lorentzian \cite{CZ} and Riemannian metrics
\cite{AK} on four-dimensional Lie groups.
\end{itemize}

\section{Three-dimensional cyclic Lorentzian Lie groups}
\setcounter{equation}{0}

As proved in \cite{GO2} and reported in the above
Theorem~\ref{S-i}, for a two-dim\-en\-sional vector space $V$, one
has $\mathcal{S}(V)=\mathcal{S}_1(V)$. Consequently, any
two-dimensional pseudo-Riemannian Lie group is cyclic. Next,
homogeneous Lorentzian three-manifolds were classified in
\cite{C1}, taking into account previous results of Rahmani
\cite{R} and Cordero and Parker \cite{CP}. The classification
result is the following.

\begin{theorem}{\bf \cite{C1}}\label{Thom}
A three-dimensional connected, simply connected complete homogeneous
Lorentzian manifold $(M,g)$ is either symmetric, or  $M=G$ is a
three-dimensional Lie group and $g$ is left-invariant. Precisely,
one of the following cases occurs:

\medskip
I) If $G$ is unimodular, then there exists a pseudo-orthonormal
frame field $\{ e_1,e_2,e_3\}$, with $e_3$ time-like, such that
the Lie algebra of $G$ is one of the following:

\begin{eqnarray}
& &\left[e_1,e_2 \right]=\alpha e_1-\beta e_3, \nonumber \\
\mathfrak{g} _1  : & &\left[ e_1,e_3\right]=-\alpha e_1-\beta e_2, \label{g1}\\
& & \left[e_2,e_3\right]=\beta e_1 +\alpha e_2 +\alpha  e_3 \qquad \alpha \neq 0.  \nonumber
\end{eqnarray}

If $\beta \neq 0$, then $G$ is $\widetilde{SL}(2,\mathbb R)$,
while for $\beta=0$, $G=E(1,1)$ is the group of rigid motions of
the Minkowski two-space.

\begin{eqnarray}
& &\left[e_1,e_2 \right]=-\gamma e_2-\beta e_3, \nonumber \\
\mathfrak{g} _2: & &\left[ e_1,e_3\right]=-\beta e_2+\gamma e_3, \qquad \gamma \neq 0, \label{g2} \\
& & \left[e_2,e_3\right]=\alpha e_1 .  \nonumber
\end{eqnarray}
In this case, $G=\widetilde{SL}(2,\mathbb R)$ if $\alpha \neq 0$,
while $G=E(1,1)$ if $\alpha=0$.

\begin{equation}\label{g3}
(\mathfrak{g} _3): \quad \left[e_1,e_2 \right]=-\gamma e_3, \quad \left[ e_1,e_3\right]=-\beta e_2, \quad
 \left[e_2,e_3\right]=\alpha e_1 .
\end{equation}
The following Table~I (where $\widetilde{E}(2)$ and $H_3$
respectively denote the universal covering of the group of rigid
motions in the Euclidean two-space and the Heisenberg group) lists
all the Lie groups $G$ which admit a Lie algebra {\bf $g_3$},
according to the different possibilities for $\alpha$, $\beta$ and
$\gamma$:
\begin{center}
\begin{tabular}{|c|c|c|c|}
\hline Lie group &  $\alpha$ & $\beta$ & $\gamma$ $\vphantom{\displaystyle{A^{B^{C}}}}$\\
\hline  $\widetilde{SL}(2,\mathbb R)$ & $+$ & $+$ & $+$ $\vphantom{\displaystyle{A^{B^{C^{D}}}}}$\\
\hline  $\widetilde{SL}(2,\mathbb R)$ & $+$ & $-$ & $-$ $\vphantom{\displaystyle{A^{B^{C^{D}}}}}$ \\
\hline  $SU(2)$ & $+$ & $+$ & $-$ $\vphantom{\displaystyle{A^{B^{C}}}}$\\
\hline  $\widetilde{E}(2)$ & $+$ & $+$ & $0$ $\vphantom{\displaystyle{A^{B^{C^{D}}}}}$ \\
\hline  $\widetilde{E}(2)$ & $+$ & $0$ & $-$ $\vphantom{\displaystyle{A^{B^{C^{D}}}}}$ \\
\hline  $E(1,1)$ & $+$ & $-$ & $0$ $\vphantom{\displaystyle{A^{B^{C}}}}$\\
\hline  $E(1,1)$ & $+$ & $0$ & $+$ $\vphantom{\displaystyle{A^{B^{C}}}}$\\
\hline  $H_3$ & $+$ & $0$ & $0$ $\vphantom{\displaystyle{A^{B^{C}}}}$\\
\hline  $H_3$ & $0$ & $0$ & $-$ $\vphantom{\displaystyle{A^{B^{C}}}}$\\
\hline  $\mathbb R \oplus \mathbb R \oplus \mathbb R$ & $0$ & $0$ & $0$ $\vphantom{\displaystyle{A^{B^{C}}}}$\\
\hline
\end{tabular} \nopagebreak \\ \nopagebreak {\em Table~I: 3D Lorentzian Lie groups with Lie algebra $\g_3$} $\vphantom{\displaystyle\frac{a}{2}}$
\end{center}

\begin{eqnarray}
& &\left[e_1,e_2 \right]=- e_2 + (2 \varepsilon - \beta) e_3, \qquad \varepsilon = \pm 1, \nonumber \\
\mathfrak{g} _4: & &\left[ e_1,e_3\right]=-\beta e_2 + e_3,  \label{g4} \\
& & \left[e_2,e_3\right]=\alpha e_1 .  \nonumber
\end{eqnarray}
Table~II below describes all Lie groups $G$ admitting a Lie algebra {\bf $g_4$}:
\begin{gather*}
\begin{array}{cc}
\begin{tabular}{|c|c|c|}
\hline Lie group \quad  {\rm ($\varepsilon =1$)} &  $\alpha$ & $\beta$  $\vphantom{\displaystyle{A^{B^{C}}}}$ \\
\hline  $\widetilde{SL}(2,\mathbb R)$ & $\neq 0$ & $\neq 1$ $\vphantom{\displaystyle{A^{B^{C^{D}}}}}$ \\
\hline  $E(1,1)$ & $0$ & $\neq 1$ $\vphantom{\displaystyle{A^{B^{C}}}}$ \\
\hline  $E(1,1)$ & $<0$ & $1$  $\vphantom{\displaystyle{A^{B^{C}}}}$ \\
\hline  $\widetilde{E}(2)$ & $>0$ & $1$ $\vphantom{\displaystyle{A^{B^{C^{D}}}}}$ \\
\hline  $H_3$ & $0$ & $1$ $\vphantom{\displaystyle{A^{B^{C}}}}$ \\
\hline
\end{tabular}
& \hspace{-2mm}
\begin{tabular}{|c|c|c|}
\hline Lie group \quad {\rm ($\varepsilon =-1$)} &  $\alpha$ & $\beta$  $\vphantom{\displaystyle{A^{B^{C}}}}$ \\
\hline  $\widetilde{SL}(2,\mathbb R)$ & $\neq 0$ & $\neq -1$ $\vphantom{\displaystyle{A^{B^{C^{D}}}}}$ \\
\hline  $E(1,1)$ & $0$ & $\neq -1$ $\vphantom{\displaystyle{A^{B^{C}}}}$ \\
\hline  $E(1,1)$ & $>0$ & $-1$ $\vphantom{\displaystyle{A^{B^{C}}}}$ \\
\hline  $\widetilde{E}(2)$ & $<0$ & $-1$ $\vphantom{\displaystyle{A^{B^{C^{D}}}}}$ \\
\hline  $H_3$ & $0$ & $-1$ $\vphantom{\displaystyle{A^{B^{C}}}}$\\
\hline
\end{tabular}
\end{array}
 \\
 \text{{\em Table~II: 3D Lorentzian Lie groups with Lie algebra $\g_4$}}
 \vphantom{\displaystyle\frac{a}{2}}
 \end{gather*}
%
II) If $G$ is non-unimodular, then there exists a
pseudo-orthonormal frame field $\{ e_1,e_2,e_3\}$, with $e_3$
time-like, such that the Lie algebra of $G$ is one of the
following:
\begin{eqnarray}
& &\left[e_1,e_2 \right]=0, \nonumber \\
\mathfrak{g} _5: & &\left[ e_1,e_3\right]=\alpha e_1+\beta e_2, \label{g5} \\
& & \left[e_2,e_3\right]=\gamma e_1 +\delta e_2, \qquad \alpha +\delta \neq 0, \,
\alpha \gamma +\beta \delta =0.  \nonumber
\end{eqnarray}
\begin{eqnarray}
& &\left[e_1,e_2 \right]=\alpha e_2 +\beta e_3, \nonumber \\
\mathfrak{g} _6: & &\left[ e_1,e_3\right]=\gamma e_2+\delta e_3, \label{g6} \\
& & \left[e_2,e_3\right]= 0, \qquad \qquad \qquad \alpha +\delta \neq 0, \, \alpha \gamma -\beta \delta =0.  \nonumber
\end{eqnarray}
\begin{eqnarray}
& &\left[e_1,e_2 \right]=-\alpha e_1-\beta e_2 -\beta e_3, \nonumber \\
\mathfrak{g} _7: & &\left[ e_1,e_3\right]=\alpha e_1+\beta e_2 +\beta e_3, \label{g7} \\
& & \left[e_2,e_3\right]=\gamma e_1 +\delta e_2 +\delta  e_3 ,  \qquad \alpha +\delta \neq 0, \, \alpha \gamma =0. \nonumber
\end{eqnarray}
\end{theorem}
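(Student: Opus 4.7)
The plan is to reduce the statement to the already-published local classifications and then argue case-by-case on the algebraic model. First I would invoke the dichotomy, due to Cordero--Parker \cite{CP}, that every three-dimensional connected, simply connected, complete homogeneous Lorentzian manifold is either locally symmetric (hence symmetric under the standing hypotheses) or is obtained as a Lie group equipped with a left-invariant metric. This reduces the task to classifying three-dimensional Lie algebras together with a choice of Lorentzian inner product, up to Lorentz isometry of the Lie algebra. From here the proof splits along the standard dichotomy between unimodular and non-unimodular $\g$, since whether $\operatorname{tr}\ad_x=0$ for all $x$ is an intrinsic algebraic property, preserved under isometry.

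For the unimodular case I would adapt Milnor's Riemannian argument to Lorentzian signature, as done by Rahmani \cite{R}. Fixing the Lorentzian inner product $g$, one defines a cross product $\times:\g\times\g\to\g$ characterized by $g(x\times y,z)=\operatorname{vol}(x,y,z)$. Unimodularity is equivalent to the existence of a $g$-self-adjoint endomorphism $L$ of $\g$ such that $[x,y]=L(x\times y)$, so the classification reduces to listing the canonical forms of $g$-self-adjoint operators on a three-dimensional Lorentzian vector space. As highlighted by the authors in the discussion following Proposition~\ref{RieLo}, these canonical forms come in four Segre types rather than the single diagonalizable form available in the Riemannian case. Working out each of the four normal forms and reading off the brackets in an adapted pseudo-orthonormal frame $\{e_1,e_2,e_3\}$ with $e_3$ timelike produces exactly the four families $\g_1,\g_2,\g_3,\g_4$. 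Identifying the global Lie group in each subfamily is then a matter of computing the Killing form signature and the sign pattern of the structure constants; this is what fills Tables~I and~II, and is essentially a bookkeeping step once the normal forms are in hand.

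For the non-unimodular case I would use the fact that the unimodular kernel $\mathfrak{u}=\{x\in\g:\operatorname{tr}\ad_x=0\}$ is a two-dimensional abelian ideal. The key new feature, absent in the Riemannian setting, is that a two-plane inside a Lorentzian three-space inherits an inner product that may be positive definite, Lorentzian, or degenerate. Splitting by the causal character of $\mathfrak{u}$ yields three mutually exclusive subcases, and in each one I would choose a pseudo-orthonormal frame adapted to $\mathfrak{u}$ (with $e_3$ timelike) and parametrize $\ad_{e_3}\!\!\mid_{\mathfrak{u}}$, imposing that the Jacobi identity and the Lorentz normalization are compatible. The spacelike, timelike, and degenerate cases produce the three families $\g_5$, $\g_6$, $\g_7$, together with the trace-type constraint $\alpha+\delta\neq 0$ (non-unimodularity) and the relations $\alpha\gamma+\beta\delta=0$, $\alpha\gamma-\beta\delta=0$, or $\alpha\gamma=0$ that come from the Jacobi identity in each frame.

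The main obstacle I anticipate is not any single computation but rather checking that the seven families listed are pairwise non-isometric and that no further normalization can be performed inside each family; this requires tracking the residual isotropy subgroup of $O(2,1)$ that preserves the chosen frame, and in the degenerate cases of $\g_4$ and $\g_7$ one must carefully handle the parabolic stabilizer which does not act by diagonal rescalings. Once this redundancy is under control, matching the resulting Lie algebras to their simply connected groups ($\widetilde{SL}(2,\mathbb R)$, $SU(2)$, $\widetilde{E}(2)$, $E(1,1)$, $H_3$, and the abelian one) is standard and gives the tables displayed in the statement.
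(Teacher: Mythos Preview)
The paper does not prove this theorem at all: it is quoted verbatim from \cite{C1} (building on \cite{R} and \cite{CP}) and is used purely as input for the cyclic classification that follows. So there is no ``paper's own proof'' to compare your proposal against.

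That said, your sketch is essentially the strategy of the original references. A couple of minor corrections: the dichotomy ``symmetric versus left-invariant on a Lie group'' is established in \cite{C1}, not in \cite{CP}; and the side conditions $\alpha\gamma+\beta\delta=0$, $\alpha\gamma-\beta\delta=0$, $\alpha\gamma=0$ in $\g_5,\g_6,\g_7$ are not consequences of the Jacobi identity (which is automatic once $\mathfrak{u}$ is abelian and two-dimensional) but are normalizations obtained by using the residual freedom in the choice of pseudo-orthonormal basis of $\mathfrak{u}$, exactly the redundancy issue you flag at the end. Otherwise your outline---Lorentzian cross product plus Segre types of the self-adjoint $L$ in the unimodular case, causal type of the unimodular kernel in the non-unimodular case---matches what Rahmani and Cordero--Parker actually do.
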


\noindent With the obvious exception of $\mathbb S^2 \times
\mathbb R$, every three-dimensional Lorentzian symmetric space can
also be realized in terms of a suitable Lorentzian Lie group
\cite[Theorem~4.2]{C2}. Hence, apart from $\mathbb S^2 \times
\mathbb R$, the classification of three-dimensional Lorentzian
cotorsionless manifolds reduces to the one of three-dimensional
Lorentzian Lie groups.

In order to have a cyclic metric $g$, it suffices to check
condition \eqref{cyclic} on the vectors of a basis $\{e_i\}$ of $\g$, that is,
\begin{equation}\label{cyclicbase}
\mathfrak{S}_{i,j,k=1}^3 \ g([e_i,e_j],e_k)=0 \quad \text{for all
indices} \; i,j,k.
\end{equation}
Note that if two of indices $i,j,k$ coincide, then
equation~\eqref{cyclicbase} is trivially satisfied. Hence, in the
three-dimensional case, $g$ is cyclic if and only if
\begin{equation}\label{cyclicbase3D}
 g([e_1,e_2],e_3)+g([e_2,e_3],e_1)+g([e_3,e_1],e_2)=0.
\end{equation}
For each three-dimensional Lorentzian Lie group, the above Theorem~\ref{Thom} provides an explicit description of the corresponding Lie algebra in terms of a pseudo-orthonormal basis $\{e_1,e_2,e_3\}$ of $\g$, with $e_3$ time-like.  We now check equation~\eqref{cyclicbase3D} for these examples and we get the following cases:
\begin{itemize}
\item[1)] $\mathfrak{g} _1$ is cyclic if and only if $\beta=0$;
\item[2)] $\mathfrak{g} _2$ is cyclic if and only if $\alpha=-2\beta$;
\item[3)] $\mathfrak{g} _3$ is cyclic if and only if $\alpha+\beta+\gamma=0$;
\item[4)] $\mathfrak{g} _4$ is cyclic if and only if $\alpha=2(\varepsilon-\beta)$;
\item[5)] $\mathfrak{g} _5$ is cyclic if and only if $\beta-\gamma=0$;
\item[6)] $\mathfrak{g} _6$ is cyclic if and only if $\beta+\gamma=0$;
\item[7)] $\mathfrak{g} _7$ is cyclic if and only if $\gamma=0$.
\end{itemize}
Therefore, taking into account the above Theorem~\ref{Thom}, we
proved the following result.

\begin{theorem}\label{cyclic3D}
A three-dimensional connected, simply connected non-abelian cyclic Lorentzian Lie
group is isometrically isomorphic to one of the following Lie
groups:

\smallskip
I) In the unimodular case:
\begin{itemize}
\item[(a)] $E(1,1)$, with Lie algebra described by one of the
following cases: \newline $\g_1$ with $\beta=0$; $\g_2$ with
$\alpha=\beta=0$; $\g_3$ with $\alpha+\beta=\gamma=0$;
\vspace{4pt}\item[(b)] $\widetilde{SL}(2,\mathbb R)$, with Lie
algebra described by one of the following cases: \newline $\g_2$
with $\alpha=-2\beta \neq 0$; $\g_3$ with
$\alpha=-(\beta+\gamma)>0$ and $\beta,\gamma <0$; $\g_4$ with
$\alpha=2(\varepsilon-\beta)\neq 0$; \vspace{4pt}\item[(c)]
$SU(2)$, with Lie algebra described by $\g_3$ with
$\alpha=-(\beta+\gamma)$ and $\gamma <0<\beta$;
\vspace{4pt}\item[(d)] $\tilde E(2)$, with Lie algebra described
by $\g_3$ with $\beta=\alpha+\gamma=0$ and $\gamma <0$;
\vspace{0.5pt}\item[(e)] $H_3$, with Lie algebra described by
$\g_4$ with $\alpha=\varepsilon-\beta=0$.
\end{itemize}

\smallskip
II) In the non-unimodular case: the connected, simply connected Lie group
$G$, whose Lie algebra is either  $\g_5$ with $\beta=\gamma$,
$\g_6$ with $\beta=-\gamma$, or $\g_7$ with $\gamma=0$.
\end{theorem}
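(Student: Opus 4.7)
The plan is to reduce the problem entirely to the parametric classification in Theorem~\ref{Thom}, which already exhausts all isomorphism types of three-dimensional connected, simply connected Lorentzian Lie groups in terms of seven families $\g_1,\dots,\g_7$ of Lie algebras described by structure constants with respect to a pseudo-orthonormal basis $\{e_1,e_2,e_3\}$ (with $e_3$ time-like). Since the cyclic condition \eqref{cyclic} is tensorial, evaluating it on basis vectors suffices, and in dimension three, as noted in \eqref{cyclicbase3D}, it collapses to the single scalar equation
\[
g([e_1,e_2],e_3)+g([e_2,e_3],e_1)+g([e_3,e_1],e_2)=0.
\]
The whole proof therefore reduces to: (i)~substituting the brackets of each $\g_i$ into this scalar condition; and (ii)~matching the resulting constraint on the parameters $\alpha,\beta,\gamma,\delta,\varepsilon$ against the group identifications in Tables~I and~II.

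First I would perform the seven direct substitutions. Using $g(e_i,e_j)=\varepsilon_i\delta_{ij}$ with $\varepsilon_1=\varepsilon_2=1$, $\varepsilon_3=-1$, each computation reduces to a short linear combination of the structure constants. The seven algebraic conditions obtained are precisely those listed in items (1)--(7) of the preceding enumeration: $\beta=0$ for $\g_1$, $\alpha=-2\beta$ for $\g_2$, $\alpha+\beta+\gamma=0$ for $\g_3$, $\alpha=2(\varepsilon-\beta)$ for $\g_4$, $\beta=\gamma$ for $\g_5$, $\beta=-\gamma$ for $\g_6$, and $\gamma=0$ for $\g_7$. This is the only step requiring calculation, but it is completely routine.

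Next I would intersect these cyclic conditions with the tables describing the underlying Lie groups. For $\g_1$ with $\beta=0$ and $\alpha\neq0$, Theorem~\ref{Thom} forces $G=E(1,1)$, yielding case~(a). For $\g_2$ with $\alpha=-2\beta$ I split into $\beta=0$ (giving $\alpha=0$ and $G=E(1,1)$, case~(a)) and $\beta\neq0$ (giving $\alpha\neq0$ and $G=\widetilde{SL}(2,\mathbb R)$, case~(b)). The cyclic condition $\alpha+\beta+\gamma=0$ on $\g_3$ must be combined with the row structure of Table~I: the degenerate row $\alpha=\beta=\gamma=0$ is excluded (abelian), and the remaining rows are partitioned by sign. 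This gives all three cases of $E(1,1)$ (with one parameter vanishing), $\widetilde{SL}(2,\mathbb R)$ (when $\beta,\gamma<0$ and $\alpha=-(\beta+\gamma)>0$), $SU(2)$ (when $\gamma<0<\beta$), and $\tilde E(2)$ (when $\beta=0$, $\gamma<0$), matching cases~(a)--(d). The analysis of $\g_4$ proceeds similarly from Table~II: $\alpha=2(\varepsilon-\beta)$ with $\alpha\neq0$ gives $\widetilde{SL}(2,\mathbb R)$ (case~(b)), while $\alpha=0$ forces $\beta=\varepsilon$ and yields $G=H_3$ (case~(e)). In the non-unimodular case~II, no further table-matching is needed: the cyclic conditions simply restrict the parameters within $\g_5$, $\g_6$, $\g_7$ as stated, non-abelianity being ensured by $\alpha+\delta\neq0$.

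The only real obstacle I anticipate is bookkeeping: ensuring that degenerate parameter combinations producing the abelian algebra are excluded from the non-abelian classification, and that the sign and nonvanishing conditions in Tables~I and~II are correctly propagated through the intersection with the cyclic constraint (in particular avoiding the double-counting of $E(1,1)$ across $\g_1,\g_2,\g_3$ and of $\widetilde{SL}(2,\mathbb R)$ across $\g_2,\g_3,\g_4$). Since Theorem~\ref{Thom} is taken as given, no geometric or Lie-theoretic argument beyond this case analysis is required.
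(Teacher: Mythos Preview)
Your proposal is correct and follows essentially the same approach as the paper: reduce the cyclic condition to the single scalar equation \eqref{cyclicbase3D}, evaluate it on each of the seven families $\g_1,\dots,\g_7$ of Theorem~\ref{Thom} to obtain the seven parameter constraints, and then read off the underlying groups from Tables~I and~II. The only (minor) slip is the phrase ``all three cases of $E(1,1)$'' in your $\g_3$ analysis---Table~I yields exactly one $E(1,1)$ row compatible with $\alpha+\beta+\gamma=0$ (namely $\gamma=0$, $\alpha+\beta=0$), the other two $E(1,1)$ entries in case~(a) coming from $\g_1$ and $\g_2$---but this is a bookkeeping wording issue, not a gap in the argument.
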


\noindent Note that in general, each of the cases listed in the
above Theorem~\ref{cyclic3D} gives rise to a family of
left-invariant cyclic Lorentzian metrics, depending on one or more
parameters.

Curvature properties of three-dimensional Lorentzian Lie groups
have been determined in \cite{C2}. Together with the examples
classified in Theorem~\ref{cyclic3D}, the results of \cite{C2}
already permit to emphasize some deep differences among Lorentzian
and Riemannian cyclic metrics. In fact:

\begin{itemize}
\item[(1)] $SU(2)$ is a connected, simply connected Lie group, both {\em
compact} and {\em simple}. Hence, case (c) of
Theorem~\ref{cyclic3D} yields a Lorentzian counterexample to both
Theorem~\ref{p2} and Theorem~\ref{p3}. \item[(2)] The Heisenberg
group $H_3$ is non-abelian and nilpotent. Hence, case (e) of
Theorem~\ref{cyclic3D} yields a Lorentzian counterexample to both
Proposition~\ref{p1} and Proposition~\ref{p4}. \item[(3)]
Non-unimodular Lie group $G$, with Lie algebra $\g_7$ satisfying
either $\alpha=\gamma=0$ or $\gamma=0\neq \alpha=\delta$, is
equipped with a flat cyclic Lorentzian metric, giving a Lorentzian
counterexample to Proposition~\ref{p1},(iii).
\end{itemize}

\section{Four-dimensional cyclic Lorentzian Lie groups}
\setcounter{equation}{0}

As we observed in Section~2 (Proposition~\ref{RieLo}), in any
dimension $n$,  connected, simply connected Lorentzian Lie groups coincide
with the Riemannian ones. Taking into account the classification
of four-dimensional Rieman\-nian Lie groups given by
B\'erard-B\'ergery in  \cite{BB}, we then have the following.

\begin{proposition}\label{4DLG}
The connected and simply connected four-dimensional Lorentian Lie
groups are:
\begin{itemize}
\item[(i)] the (unsolvable) direct products $SU(2) \times \mathbb R$ and $\widetilde{SL}(2,\mathbb R) \times \mathbb R$;
\item[(ii)] one of the following solvable Lie groups:
\begin{itemize}
\item[(ii1)] the non-trivial semi-direct products $\tilde{E}(2)
\rtimes \mathbb R$ and $E(1,1) \rtimes \mathbb R$; \item[(ii2)]
the non-nilpotent semi-direct products $H_3 \rtimes \mathbb R$
($H_3$ denoting the Heisenberg group); \item[(ii3)] the
semi-direct products $\mathbb R^3 \rtimes \mathbb R$.
\end{itemize}
\end{itemize}
\end{proposition}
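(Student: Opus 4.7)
The plan is to reduce everything to the Riemannian classification via Proposition~\ref{RieLo}, and then invoke the explicit list of B\'erard-B\'ergery \cite{BB}. Proposition~\ref{RieLo} tells us that the classes of connected, simply connected four-dimensional Lorentzian and Riemannian Lie groups coincide: given a Lorentzian inner product on $\g$ and a pseudo-orthonormal basis $\{e_1,e_2,e_3,e_4\}$ with $e_4$ time-like, declaring this basis to be orthonormal defines a Riemannian inner product on the same Lie algebra, and conversely. Hence it is enough to enumerate the four-dimensional Riemannian Lie groups, which is the content of \cite{BB}.

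To match B\'erard-B\'ergery's list with the statement, I would split by the Levi decomposition of $\g$. Since no semisimple real Lie algebra exists in dimension $1$, $2$ or $4$, if $\g$ is not solvable then its Levi factor is three-dimensional simple and its radical is one-dimensional; as the derivations of $\mathbb{R}$ are killed by the brackets of a semisimple algebra, this decomposition is a direct sum $\g\cong\mathfrak{s}\oplus\mathbb{R}$ with $\mathfrak{s}\in\{\mathfrak{su}(2),\mathfrak{sl}(2,\mathbb{R})\}$, which at the simply connected group level produces item~(i). If instead $\g$ is solvable, then the nilradical has dimension at least three, so $\g$ contains a codimension-one ideal $\h$ and, choosing a complementary line, splits as a semi-direct product $\g = \h\rtimes_\phi\mathbb{R}$ with $\h$ a three-dimensional real Lie algebra and $\phi\in\mathrm{Der}(\h)$. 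The four isomorphism classes of three-dimensional $\h$ integrate to $\mathbb{R}^3$, $H_3$, $E(1,1)$ and $\tilde{E}(2)$, and grouping the resulting simply connected extensions yields the four families listed in item~(ii).

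The main obstacle I anticipate is the bookkeeping inside \cite{BB}: different derivations $\phi$ on the same $\h$ may produce globally non-isomorphic four-dimensional groups, so one has to verify that the resulting list collapses to precisely the four solvable families $\tilde{E}(2)\rtimes\mathbb{R}$, $E(1,1)\rtimes\mathbb{R}$, $H_3\rtimes\mathbb{R}$ and $\mathbb{R}^3\rtimes\mathbb{R}$ without omitting or duplicating any case. Since this is exactly the content of B\'erard-B\'ergery's theorem, the proposition then follows by quoting \cite{BB} and translating its Riemannian statement via Proposition~\ref{RieLo}.
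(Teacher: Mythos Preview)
Your approach is exactly the one the paper takes: the proposition is stated without a separate proof, the text preceding it simply invokes Proposition~\ref{RieLo} to identify four-dimensional Lorentzian and Riemannian Lie groups, and then quotes B\'erard-B\'ergery \cite{BB} for the Riemannian list. So the overall strategy is correct and matches the paper.

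Two small points about your heuristic middle paragraph. First, the claim that ``the nilradical has dimension at least three'' is false in general for four-dimensional solvable algebras (e.g.\ $\mathfrak{aff}(1)\oplus\mathfrak{aff}(1)$ has two-dimensional nilradical); the correct reason every solvable $\g$ admits a codimension-one ideal is simply that any hyperplane containing $[\g,\g]$ is an ideal. Second, you list only four isomorphism types for the three-dimensional ideal $\h$, implicitly assuming $\h$ is unimodular; a priori a codimension-one ideal of a solvable algebra could be non-unimodular. The paper actually comments on this point right after the proposition, observing that a semi-direct product $\tilde\h\rtimes\mathfrak{r}$ with $\tilde\h$ non-unimodular is always isomorphic to one with unimodular three-dimensional factor. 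Since you ultimately defer the bookkeeping to \cite{BB}, neither issue invalidates your argument, but the sketch as written would not stand on its own.
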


We observe that all the examples classified in the above
Proposition share the same fundamental structure, in the sense
that all their Lie algebras $\g$ are of the form $\g=\mathfrak{h}
\rtimes \mathfrak{r}$, where $\mathfrak{r}$ is a one-dimensional
Lie algebra, spanned by a vector acting (possibly in a trivial
way) as a derivation on a  three-dimensional unimodular Lie
algebra $\h$.

Semi-direct products involving a three-dimensional non-unimodular
Lie algebra do not appear in the above classification. Indeed, it
is easy to check that a semi-direct product $\tilde\h \rtimes
\mathfrak{r}$, with $\tilde\h$ non-unimodular, is also isomorphic
to a semi-direct product $\h \rtimes \tilde{\mathfrak{r}}$, with
$\h$ unimodular.

To make the Lorentzian case more interesting than its Riemannian
counterpart, we have the following fundamental difference: if $g$
is a positive definite inner product on $\g=\mathfrak{h} \rtimes
\mathfrak{r}$, the same is true for its restriction $g|_{\h}$ over
$\mathfrak{h}$. However, if $g$ is Lorentzian, then three
different cases can occur, as $g|_{\h}$ is either
\begin{itemize}
\item[(a)] {\em positive definite}, \ (b) {\em Lorentzian}, or \
(c) {\em degenerate}.
\end{itemize}
We now give the following key result.

\begin{proposition}\cite{CZ}\label{gandg}
Let $(\g,g)$ be an arbitrary four-dimensional Lorentzian Lie
algebra. Then, there exists a basis $\{e_1,e_2,e_3,e_4 \}$ of
$\g$, such that
\begin{itemize}
\item $\h={\rm span}(e_1,e_2,e_3)$ is a three-dimensional Lie
algebra and $e_4$ acts as a derivation on $\h$ (that is,
$\g=\mathfrak{h}\rtimes \mathfrak{r}$, where $\mathfrak{r} ={\rm
span}(e_4)$), and \item with respect to $\{e_1,e_2,e_3,e_4 \}$,
the Lorentzian inner product takes one of the following forms:
$$(a) \;
\left(\begin{array}{cccc}
1 & 0 & 0 & 0 \\
0 & 1 & 0 & 0 \\
0 & 0 & 1 & 0 \\
0 & 0 & 0 & -1 \\
\end{array}\right),  \quad
(b) \;
\left(\begin{array}{cccc}
1 & 0 & 0 & 0 \\
0 & 1 & 0 & 0 \\
0 & 0 & -1 & 0 \\
0 & 0 & 0 & 1 \\
\end{array}\right),  \quad
(c) \;
\left(\begin{array}{cccc}
1 & 0 & 0 & 0 \\
0 & 1 & 0 & 0 \\
0 & 0 & 0 & 1 \\
0 & 0 & 1 & 0 \\
\end{array}\right).
$$
\end{itemize}
\end{proposition}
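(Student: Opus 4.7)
The plan is to combine Proposition~\ref{4DLG} with a direct metric adjustment. First, Proposition~\ref{4DLG} provides a decomposition $\g=\h\rtimes\mathfrak{r}$, where $\h$ is a three-dimensional subalgebra and $\mathfrak{r}$ is a one-dimensional complement; fix a generator $e_4^0$ of $\mathfrak{r}$. The key observation used throughout is that the semi-direct structure is preserved under two operations: rescaling $e_4^0$ by a nonzero constant, and adding an arbitrary element $v\in\h$. Indeed, $\mathrm{span}(e_4^0+v)$ is still a one-dimensional subspace complementary to $\h$, and
\[
[e_4^0+v,\h]=[e_4^0,\h]+[v,\h]\subset\h,
\]
so $e_4^0+v$ still acts on $\h$ by derivations. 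This flexibility allows the required metric normalizations without destroying the algebraic setting.

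Since $\dim\g=4$ and $\dim\h=3$, a dimension count shows that $\h\cap\h^{\perp}$ has dimension at most one. Together with the constraint that $g$ has signature $(3,1)$, this yields exactly three possibilities for $g|_{\h}$: positive definite, Lorentzian of signature $(2,1)$, or degenerate with a one-dimensional null kernel. These correspond respectively to cases (a), (b), (c) of the statement. In cases (a) and (b), choose any pseudo-orthonormal basis $\{e_1,e_2,e_3\}$ of $\h$ (with $e_3$ timelike in case (b)), and then replace $e_4^0$ by its $g$-orthogonal component with respect to $\h$; in case (a) the resulting vector must be timelike (since $\h^{\perp}$ carries the only negative direction of $g$), and in case (b) spacelike. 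A single rescaling produces the matrices (a) and (b).

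Case (c) is the main obstacle. Let $e_3$ span the one-dimensional null kernel of $g|_{\h}$. Choose a null partner $f\in\g$ with $g(e_3,f)=1$ and $g(f,f)=0$; the plane $\mathrm{span}(e_3,f)$ has signature $(1,1)$, so its orthogonal complement $V$ is two-dimensional and positive definite --- this is the Lorentzian-specific step that forces the quotient form on $\h/\ker g|_{\h}$ to be positive, not indefinite. Since $g(e_3,f)=1\neq 0$ and $e_3\in\ker g|_{\h}$, the vector $f$ cannot lie in $\h$; a dimension count then forces $\h=\mathrm{span}(e_3)\oplus V$. Pick $\{e_1,e_2\}$ orthonormal in $V$ and expand $e_4^0=\gamma_1 e_1+\gamma_2 e_2+\alpha e_3+\beta f$; since $e_4^0\notin\h$ we have $\beta\neq 0$, and rescaling lets us assume $\beta=1$. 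Then setting
\[
e_4:=e_4^0-\gamma_1 e_1-\gamma_2 e_2-\alpha e_3=f
\]
produces exactly the Gram matrix (c) in the basis $\{e_1,e_2,e_3,e_4\}$, while both the algebraic adjustments (adding vectors of $\h$ and rescaling $e_4^0$) are justified by the key observation above.
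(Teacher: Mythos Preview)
Your proof is correct and follows essentially the same strategy as the paper: invoke Proposition~\ref{4DLG} for the semi-direct decomposition, observe that $e_4$ may be freely translated by elements of $\h$ and rescaled, and then split into the three cases according to the signature of $g|_{\h}$, adjusting $e_4$ accordingly. The only cosmetic difference is in case~(c): the paper first fixes an orthogonal basis $\{e_1,e_2,e_3\}$ of $\h$ and then modifies $v$ (subtracting its $e_1,e_2$-components, adding the right multiple of $e_3$ to make it null, and normalizing), whereas you first pick the null partner $f$ of $e_3$, show $\h=e_3^{\perp}=\mathrm{span}(e_3)\oplus V$, and then steer $e_4^0$ to $f$; these are the same construction read in opposite directions.
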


\begin{proof} The following argument partially corrects
and replaces the proof of Proposition~2.3 in \cite{CZ}. Consider a
semi-direct product $\g=\mathfrak{k} \rtimes \mathfrak{r}$ of two
Lie algebras $\mathfrak{r}$ and $\mathfrak{k}$, with
$\mathfrak{r}={\rm span}(v)$ one-dimensional. Note that for any
vector $w \in \mathfrak{k}$  we have again $\g=
\mathfrak{k}\rtimes \tilde{\mathfrak{r}}$, where
$\tilde{\mathfrak{r}}={\rm span}(v+w)$. In fact, since
$\mathfrak{r}$ is one-dimensional,
$\mathfrak{g}=\mathfrak{k}\rtimes \mathfrak{r}$ means that
$[\mathfrak{r},\mathfrak{r}]=0$,
$[\mathfrak{k},\mathfrak{k}]\subset \mathfrak{k}$ and
$[\mathfrak{r},\mathfrak{k}] \subset \mathfrak{k}$. From these
equations and the definition of $\tilde{\mathfrak{r}}$ it then
follows at once that the same conditions hold replacing
$\mathfrak{r}$ by $\tilde{\mathfrak{r}}$, that is,
$\g=\mathfrak{k} \rtimes \tilde{\mathfrak{r}}$.

Let $g$ denote a Lorentzian inner product on a four-dimensional
Lie algebra $\g$. Then, by the above Proposition~\ref{4DLG}, we
know that $\g=\mathfrak{h}\rtimes \mathfrak{r}$, where
$\mathfrak{r}={\rm span}(v)$ is one-dimensional. We now study
separately three cases, according on whether the restriction of
$g$ on $\h$ is respectively (a) {positive definite}, (b)
Lorentzian, or (c) degenerate.

\medskip
{\bf Case (a).} Since $g|_{\h}$ is positive definite, there exists
an orthonormal basis $\{e_1,e_2,e_3\}$ for $g|_{\h}$.

If $\mathfrak{r}={\rm span} (v)$, we now consider the orthogonal
projection $w$ of $v$ on $\h$, that is, $w:=\sum _{i=1} ^3
g(v,e_i)e_i$. Next, we put $\tilde{v} :=v-w$ and
$\tilde{\mathfrak{r}}:={\rm span}(\tilde{v})$. By the above
remark, we still have $\g=\mathfrak{h} \rtimes
\tilde{\mathfrak{r}}$.

Moreover, $\tilde{v}$ is orthogonal to $e_1,e_2,e_3$ and so,
$\tilde{\mathfrak{r}}=\mathfrak{h} ^{\perp}$. Since $g|_{\h}$ is
non-degenerate, so is $\tilde{\mathfrak{r}}=\mathfrak{h}
^{\perp}$, and the index of $g$ is the sum of the indices of
$g|_{\h}$ and $g|_{\h ^\perp}$ \cite{O'N}. Hence, $\tilde{v}$ is
necessarily time-like, and $g$ takes the form (a) with respect to
the pseudo-orthonormal basis $\{e_1,e_2,e_3,e_4\}$ of $\g$, where
we put $e_4=\tilde{v}/\sqrt{-g(\tilde{v},\tilde{v})}$.

\medskip
{\bf Case (b).} We proceed like in Case (a), with the following
slight differences: in $\h$ we now fix a pseudo-orthonormal basis
$\{e_1,e_2,e_3\}$, with $e_3$ time-like, and the orthogonal
projection $w$ of $v$ on $\h$ is given by $w:=\sum _{i=1} ^3
\varepsilon_i  g(v,e_i)e_i$, where $\varepsilon_i=g(e_i,e_i)$.
Then, $\g=\mathfrak{h} \rtimes \tilde{\mathfrak{r}}$, where
$\tilde{\mathfrak{r}}:={\rm span}(\tilde{v}=v-w)=\mathfrak{h}
^{\perp}$ (and so, $\tilde{v}$ is necessarily space-like), and $g$
takes the form (b) with respect to the pseudo-orthonormal basis
$\{e_1,e_2,e_3,e_4\}$ of $\g$, where
$e_4=\tilde{v}/\sqrt{g(\tilde{v},\tilde{v})}$.

\medskip
{\bf Case (c).} Since $g$ is Lorentzian, a subspace of $\g$ (and
so, of $\h$) on which $g$ vanishes has dimension at most one
\cite{O'N}.  Thus, being $g|_{\h}$ degenerate, its signature is
necessarily $(2,0,1)$, since all the other possibilities would
give a subspace of $\h$ dimension $\geq 2$ on which $g$ vanishes,
which cannot occur. Hence,  $\h$ admits an orthogonal basis
$\{e_1,e_2,e_3\}$, with $e_1,e_2$ unit space-like vectors and
$e_3$ a light-like vector.

If $\mathfrak{r}={\rm span} (v)$, we consider $\tilde{v}:=v-\sum
_{i=1} ^2  g(v,e_i)e_i$ and obtain $\g=\mathfrak{h} \rtimes
\tilde{\mathfrak{r}}$, with $\tilde{\mathfrak{r}}:={\rm
span}(\tilde{v})$ and $\tilde{v}$ orthogonal to $e_1,e_2$.
Moreover, because of the non-degeneracy of $g$, necessarily
$g(\tilde{v},e_3) \neq 0$.

Next, there exists a unique $\lambda_0 \in \mathbb R$, such that
$\tilde{v}+\lambda_0 e_3$ is light-like: explicitly, $\lambda_0 =
-g(\tilde{v},\tilde{v})/2g(\tilde{v},e_3)$. Putting
$k=g(\tilde{v}+\lambda _0 e_3, e_3)=g(\tilde{v}, e_3) \neq 0$ and
$e_4= \frac{1}{k}(\tilde{v}+\lambda_0 e_3)$, we get that $e_4$
acts as a derivation on $\h$, and $g$ takes the form (b) with
respect to the basis $\{e_1,e_2,e_3,e_4\}$.
\end{proof}

\noindent In the following subsections we shall classify
four-dimensional cyclic Lorentz\-ian Lie groups, treating
separately the three cases occurring in the above
Proposition~\ref{gandg}.

\subsection{First case: $\mathfrak{h}$ Riemannian}

Following \cite{M}, there exists an orthonormal basis $\{e_{1},e_{2},e_{3}\}$ of $\h$, such
that%
\begin{equation}\label{3DRie}
\lbrack e_{1},e_{2}]=a_{3}e_{3},\qquad\lbrack e_{2},e_{3}]=a_{1}e_{1}%
,\qquad\lbrack e_{3},e_{1}]=a_{2}e_{2},
\end{equation}
providing the cases listed in the following Table III, depending on the signs of $a_{1}, a_{2}$ and $a_{3}$.
\begin{center}
\begin{tabular}{|c|c|c|c|}
\hline {\em Lie group} &  $a_1$ & $a_2$ & $a_3$ $\vphantom{\displaystyle{A^{B^{C}}}}$\\
\hline  $SU(2)$ & $+$ & $+$ & $+$ $\vphantom{\displaystyle{A^{B^{C}}}}$\\
\hline  $\widetilde{SL}(2,\mathbb R)$ & $+$ & $+$ & $-$ $\vphantom{\displaystyle{A^{B^{C^{D}}}}}$\\
\hline  $\widetilde{E}(2)$ & $+$ & $+$ & $0$ $\vphantom{\displaystyle{A^{B^{C^{D}}}}}$ \\
\hline  $E(1,1)$ & $+$ & $-$ & $0$ $\vphantom{\displaystyle{A^{B^{C}}}}$\\
\hline  $H_3$ & $+$ & $0$ & $0$ $\vphantom{\displaystyle{A^{B^{C}}}}$\\
\hline  $\mathbb R^3$ & $0$ & $0$ & $0$ $\vphantom{\displaystyle{A^{B^{C}}}}$\\
\hline
\end{tabular} \nopagebreak \\ \nopagebreak Table~III: Simply connected Riemannian 3D Lie groups $\vphantom{\displaystyle\frac{a}{2}}$
\end{center}

Since $e_4$ acts as a derivation on $\h_3$, we also have
\begin{equation}\label{deriv}
\left\{
\begin{array}{l}
\lbrack e_{1},e_{4}]=c_{1}e_{1}+c_{2}e_{2}+c_{3}e_{3},\\
\lbrack e_{2},e_{4}]=p_{1}e_{1}+p_{2}e_{2}+p_{3}e_{3},\\
\lbrack e_{3},e_{4}]=q_{1}e_{1}+q_{2}e_{2}+q_{3}e_{3},
\end{array}
\right.
\end{equation}
for some constants $c_i,p_i,q_i$, which in addition must satisfy
the Jacobi identity
\begin{equation}\label{Jac}
\lbrack\lbrack
e_{i},e_{j}],e_{k}]+[[e_{j},e_{k}],e_{i}]+[[e_{k},e_{i}
],e_{j}]=0. %
\end{equation}
Applying the cyclic condition \eqref{cyclicbase} to the
pseudo-orthonormal basis satisfying \eqref{3DRie} and
\eqref{deriv}, we easily get conditions
\begin{equation}\label{cyccond1}
a_{3}+a_{1}+a_{2}=0,\quad p_{1}=c_{2}\quad q_{1}=c_{3},\quad q_{2}=p_{3}.
\end{equation}
Requiring that the Jacobi identity \eqref{Jac} holds, and after
some computations, we get the following possible solutions:
\begin{enumerate}

\item $\{a_{2}=a_{3}=0\}$. In this case, taking into account \eqref{cyccond1}, we have that $\mathfrak{h}_{3}%
=\mathrm{span}\{e_{1},e_{2},e_{3}\}=\mathbb{R}^{3}$, with the action of
$\mathbb{R}=\mathrm{span}\{e_{4}\}$ on it defined as
\begin{align}
\lbrack e_{1},e_{4}] &  =c_{1}e_{1}+p_{1}e_{2}+q_{1}e_{3}, \nonumber \\
\lbrack e_{2},e_{4}] &  =p_{1}e_{1}+p_{2}e_{2}+q_{2}e_{3}, \label{1Rie}\\
\lbrack e_{3},e_{4}] &  =q_{1}e_{1}+q_{2}e_{2}+q_{3}e_{3}. \nonumber
\end{align}

\item $\{a_{3}=-a_{2},c_{1}=p_{1}=q_{1}=0,q_{3}=p_{2}\}$. In this
case, by  \eqref{cyccond1} and the above Table~III, we conclude
that $\mathfrak{h}_{3}=\mathfrak{e}(1,1)$, with the action of
$\mathbb{R}=\mathrm{span}\{e_{4}\}$ on it defined as

\begin{equation}\label{2Rie}
\lbrack e_{1},e_{4}]   =0,\quad \lbrack e_{2},e_{4}]
=p_{2}e_{2}+q_{2}e_{3},\quad \lbrack e_{3},e_{4}]
=q_{2}e_{2}+p_{2}e_{3}.
\end{equation}

\item $\{c_{1}=p_{2},a_{3}=q_{1}=q_{2}=q_{3}=0\}$. In this case, $\mathfrak{h}_{3}=\mathfrak{e}(1,1)$, with
the action of $\mathbb{R}=\mathrm{span}\{e_{4}\}$ on it defined as%
\begin{equation}\label{3Rie}
\lbrack e_{1},e_{4}]   =p_{2}e_{1}+c_{2}e_{2},\quad
\lbrack e_{2},e_{4}]   =c_{2}e_{1}+p_{2}e_{2},\quad
\lbrack e_{3},e_{4}]   =0.
\end{equation}

\item $\{c_{1}=q_{3}, a_{2}=a_{3}=p_{1}=p_{2}=q_{2}=0\}$. This
corresponds to $\mathfrak{h}_{3}=\mathfrak{e}(1,1)$, with the
action of $\mathbb{R}=\mathrm{span}\{e_{4}\}$ on it defined as
$$
\lbrack e_{1},e_{4}]  =q_{3}e_{1}+q_{1}e_{3},\quad
\lbrack e_{2},e_{4}]  =0,\quad
\lbrack e_{3},e_{4}]   =q_{1}e_{1}+q_{3}e_{3}.
$$

\item $\{c_{1}=p_{1}=p_{2}=q_{1}=q_{2}=q_{3}=0\}$. In this case,
by the above Table~III, $\mathfrak{h}_{3}=\mathfrak{sl}(2)$ with
the trivial action of $\mathbb{R}=\mathrm{span}\{e_{4}\}$ on it.
\end{enumerate}

It is clear that the above cases (2), (3) and (4) coincide, up to
a renumeration of $e_1,e_2,e_3$. Thus, we proved the following
result.

\begin{theorem}\label{hRie}
Let $G=H \rtimes \mathbb R$ be a connected and simply connected
four-dimensional Lie group, equipped with a left-invariant
Lorentzian metric $g$, such that $g|_{H}$ is Riemannian. If $g$ is
cyclic, then the Riemannian Lie algebra $\h$ of $H$ admits an
orthonormal basis $\{e_{1},e_{2},e_{3}\}$, such that \eqref{3DRie}
holds with $a_1+a_2+a_3=0$, and one of the following cases occurs:
\begin{description}
\item[I)] $G = \mathbb R^3 \rtimes\mathbb{R}$ and the action of $\mathbb{R}=\mathrm{span}\{e_{4}\}$ (time-like) on $\mathfrak{h}%
=\mathbb{R}^{3}$ is described by \eqref{1Rie}, for arbitrary real constants $c_1,p_1,p_2,q_1,q_2,q_3$.

\vspace{4pt}\item[II)]  $G =E(1,1) \rtimes\mathbb{R}$ and the
action of $\mathbb{R}=\mathrm{span}\{e_{4}\}$ (time-like) on
$\mathfrak{h}= \mathfrak{e}(1,1)$ is described by \eqref{2Rie},
for arbitrary real constants $p_2,q_2$.

\vspace{4pt}\item[III)] $G =\widetilde{SL}(2,\mathbb R)
\times\mathbb{R}$.
\end{description}
\end{theorem}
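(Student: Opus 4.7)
The plan is to combine Proposition~\ref{gandg}(a) with Milnor's classification of three-dimensional Riemannian Lie algebras, reducing the problem to a finite system of polynomial constraints that comes from imposing the cyclic condition \eqref{cyclicbase} together with the Jacobi identity, and then matching each solution to a Lie group in Proposition~\ref{4DLG}.

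First, because $g|_{\h}$ is positive definite, Proposition~\ref{gandg}(a) supplies a pseudo-orthonormal basis $\{e_1,e_2,e_3,e_4\}$ of $\g$ with $e_4$ time-like, $\{e_1,e_2,e_3\}$ an orthonormal basis of $\h$, and $\g=\h\rtimes\mathrm{span}(e_4)$. Milnor's theorem (see \cite{M}) then allows one to further adjust $\{e_1,e_2,e_3\}$ so that the Lie brackets within $\h$ take the diagonal form \eqref{3DRie}; the resulting correspondence between the signs of $a_1,a_2,a_3$ and the isomorphism type of $H$ is recorded in Table~III. The action of $e_4$ as a derivation of $\h$ is then encoded by the most general expressions \eqref{deriv}, involving nine real parameters $c_i,p_i,q_i$.

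Next I would impose the cyclic condition \eqref{cyclicbase}. Since $e_4$ is time-like while $e_1,e_2,e_3$ are space-like, a direct evaluation of $\mathfrak{S}\, g([e_i,e_j],e_k)$ on the four triples $(1,2,3)$, $(1,2,4)$, $(1,3,4)$, $(2,3,4)$ is straightforward and yields exactly the four linear relations \eqref{cyccond1}: namely $a_1+a_2+a_3=0$ together with the symmetry relations $p_1=c_2$, $q_1=c_3$, $q_2=p_3$ on the derivation coefficients. This step is purely linear and presents no real obstacle.

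The substantive step is to intersect this linear variety with the quadratic ideal generated by the Jacobi identities \eqref{Jac}. Here only the triples involving $e_4$ produce non-trivial conditions, because the brackets inside $\h$ already satisfy Jacobi by Milnor's form. After substituting \eqref{cyccond1} into the three Jacobi identities $[[e_i,e_j],e_4]+[[e_j,e_4],e_i]+[[e_4,e_i],e_j]=0$ for $(i,j)=(1,2),(2,3),(3,1)$, one obtains a finite polynomial system coupling $a_1,a_2,a_3$ with the derivation parameters. The hard part of the argument will be disentangling this system cleanly; the natural strategy is to split according to which of $a_1,a_2,a_3$ vanish (constrained by $a_1+a_2+a_3=0$), so the cases are: all three zero, exactly one vanishes (forcing the other two to be opposite), or none vanishes. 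Running through these branches yields the five families listed as (1)--(5) in the text. A short inspection shows that families (2), (3), (4) differ only by a permutation of $\{e_1,e_2,e_3\}$, so they collapse into a single case with $H=E(1,1)$.

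Finally I would read off the Lie group from Table~III in each surviving family: family (1) gives $H=\mathbb{R}^3$ (case I with general action \eqref{1Rie}); the coalesced family (2)--(4) gives $H=E(1,1)$ (case II with action \eqref{2Rie}); and family (5) has $a_1,a_2,a_3$ all positive with $\mathbb{R}$ acting trivially, so $G=\widetilde{SL}(2,\mathbb{R})\times\mathbb{R}$ (case III). Note that the sign distribution $(+,+,-)$ of Table~III required for $\widetilde{SL}(2,\mathbb{R})$ is compatible with $a_1+a_2+a_3=0$, while $(+,+,+)$ for $SU(2)$ is not, which explains the absence of $SU(2)\times\mathbb{R}$ from the list; similarly the $H_3$ and $E(1,1)$ sign patterns of Table~III are excluded by the cyclic condition unless one allows non-trivial action from $e_4$, which is precisely how $E(1,1)$ reappears in case II.
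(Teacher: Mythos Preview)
Your proposal follows essentially the same route as the paper: Proposition~\ref{gandg}(a) plus Milnor's basis \eqref{3DRie}, then the cyclic condition \eqref{cyclicbase} yielding \eqref{cyccond1}, then the Jacobi system \eqref{Jac}, and finally the observation that the three $\mathfrak{e}(1,1)$ solutions coincide up to a permutation of $e_1,e_2,e_3$.

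There is, however, a slip in your final paragraph. You write that ``family~(5) has $a_1,a_2,a_3$ all positive'', which is impossible under the constraint $a_1+a_2+a_3=0$ and is in any case the $SU(2)$ pattern, not the $\widetilde{SL}(2,\mathbb R)$ one; you yourself note in the next sentence that the relevant pattern is $(+,+,-)$. What actually characterises family~(5) in the paper is simply that all derivation coefficients vanish, i.e.\ $e_4$ acts trivially; the values of the $a_i$ are then free subject to $a_1+a_2+a_3=0$, and among the possibilities in Table~III only $\mathfrak{sl}(2)$ produces a case not already absorbed by families~(1)--(4). Relatedly, your remark that the $E(1,1)$ pattern is ``excluded by the cyclic condition unless one allows non-trivial action from $e_4$'' is not quite right: the pattern $(+,-,0)$ with $a_1=-a_2$ satisfies $a_1+a_2+a_3=0$, and the trivial action $p_2=q_2=0$ is a legitimate instance of case~II. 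These are expository slips in the identification step rather than gaps in the argument, but you should clean them up.
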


\subsection{Second case: $\mathfrak{h}$ Lorentzian}

In this case, $\h$ is one of the unimodular Lorentzian Lie
algebras $\g_1 -\g_4$ classified in Theorem~\ref{Thom}. We treat
these cases separately.

\smallskip\noindent\textbf{1) $\h=\g_1$.} The brackets of $\g=\h \rtimes \mathfrak{r}$ are
then completely described by \eqref{g1} and \eqref{deriv}, and
the cyclic condition  \eqref{cyclicbase} gives
\[
\beta=0, \quad c_{2}=p_{1},\quad c_{3}=-q_{1},\quad p_{3}=-q_{2}.
\]
Imposing the Jacobi identity, we only have the solution%
\[
p_{1}=0,\qquad p_{2}=-q_{3},\qquad q_{1}=0,\qquad q_{2}=q_{3},
\]
so that taking into account Theorem~\ref{Thom}, we have
$\mathfrak{g=h}\rtimes\mathbb{R}$ with $\mathfrak{h}
=\mathfrak{e}(1,1)\mathfrak{=}\mathrm{span}\{e_{1},e_{2},e_{3}\}$,
$\mathbb{R}=\mathrm{span}\{e_{4}\}$ and the action given by%
\begin{equation}\label{1Lor}
\lbrack e_{1},e_{4}]   =c_{1}e_{1},\quad
\lbrack e_{2},e_{4}]   =-q_{3}(e_{2}+e_{3}),\quad
\lbrack e_{3},e_{4}]   =q_{3}(e_{2}+e_{3}).%
\end{equation}

\smallskip \noindent\textbf{2) $\h=\g_2$.}  The brackets of $\g=\h \rtimes \mathfrak{r}$
are now described by \eqref{g2} and \eqref{deriv}.  The cyclic condition  \eqref{cyclicbase} yields
\[
\alpha=-2\beta, \quad c_{2}=p_{1},\quad c_{3}=-q_{1},\quad p_{3}=-q_{2}.
\]
Finally, the Jacobi identity \eqref{Jac} admits the following two solutions:
\begin{enumerate}
\item $\{\beta=0, c_{1}=p_{1}=q_{1}=q_{2}=0\}$. Then,
$\mathfrak{g=h}\rtimes\mathbb{R}$, with $\mathfrak{h}=\mathfrak{e}
(1,1)\mathfrak{=}\mathrm{span}\{e_{1},e_{2},e_{3}\}$, $\mathbb{R}
=\mathrm{span}\{e_{4}\}$ and the action defined as
\begin{equation}\label{2Lor}
\lbrack e_{1},e_{4}]   =0,\quad \lbrack e_{2},e_{4}]
=p_{2}e_{2},\quad \lbrack e_{3},e_{4}]   =q_{3}e_{3}.
\end{equation}

\item $\{c_{1}=p_{1}=p_{2}=q_{1}=q_{2}=q_{3}=0\}$. So,
$\mathbb{R}=\mathrm{span}\{e_{4}\}$ acts trivially. Taking into
account Proposition~\ref{4DLG}, we have
$\mathfrak{g=h}\times\mathbb{R}$ with $\mathfrak{h}=\mathrm{span}
\{e_{1},e_{2},e_{3}\}=\mathfrak{sl}(2)$.
\end{enumerate}

\smallskip\noindent\textbf{3) $\h=\g_3$.}  Starting from \eqref{g3} and \eqref{deriv}, the cyclic condition \eqref{cyclicbase} now gives
\[
\alpha+\beta
+\gamma=0, \quad c_{2}=p_{1},\quad c_{3}=-q_{1},\quad p_{3}=-q_{2}.
\]
Imposing the Jacobi identity and taking into account the
classification reported in Table~I, we have the following sets of
solutions:

\begin{enumerate}

\item $\{\beta = \gamma =0\}$. This case correspond to
$\mathfrak{g=h}\rtimes\mathbb{R}$ with $\mathfrak{h}=\mathbb{R}^3$
and
\begin{align}
[ e_{1},e_{4}] &  =c_{1}e_{1}+p_{1}e_{2}-q_{1}e_{3}, \nonumber \\
[ e_{2},e_{4}] &  =p_{1}e_{1}+p_{2}e_{2}-q_{2}e_{3}, \label{1.1Lor}\\
[ e_{3},e_{4}] &  =q_{1}e_{1}+q_{2}e_{2}+q_{3}e_{3}. \nonumber
\end{align}

\item $\{\beta=0, c_{1}=q_{3},p_{1}=p_{2}=q_{2}=0\}$. Since
$\alpha+\beta +\gamma=0$, we get $\alpha+\gamma=\beta=0$. If
$\alpha=0$, we then have a special case of the previous one. For
$\alpha \neq 0$, taking into account Table~I, we have
$\mathfrak{g=h}\rtimes\mathbb{R}$, where
$\mathfrak{h}=\mathrm{span}\{e_{1},e_{2},e_{3}\}$ is
$\mathfrak{e}(2)$, $\mathbb{R}=\mathrm{span} \{e_{4}\}$ and the
action is defined as
\begin{equation}\label{3Lor}
\lbrack e_{1},e_{4}]   =q_{3}e_{1}-q_{1}e_{3},\quad \lbrack
e_{2},e_{4}]   =0,\quad \lbrack e_{3},e_{4}]
=q_{1}e_{1}+q_{3}e_{3}.
\end{equation}

\item $\{\beta=-\gamma, c_{1}=p_{1}=q_{1}=0,q_{3}=p_{2}\}$, which
is isometric to the above case, interchanging the space-like
vectors $e_1$ and $e_2$.

\item $\{\gamma=0, c_{1}=p_{2},q_{1}=q_{2}=q_{3}=0\}$. If
$\alpha=0$ we obtain a special case of case (1). When $\alpha \neq
0$, we get $\mathfrak{g=h}\rtimes\mathbb{R}$, where
$\mathfrak{h}=\mathrm{span}\{e_{1},e_{2},e_{3}\}$ is
$\mathfrak{e}(1,1)$, $\mathbb{R}=\mathrm{span} \{e_{4}\}$ and the
action is defined as
\begin{equation}\label{3.5Lor}
[ e_{1},e_{4}]   =c_{1}e_{1}+p_{1}e_{2},\quad [ e_{2},e_{4}]
=c_2e_{1}+c_1e_{2},\quad [e_{3},e_{4}] =0.
\end{equation}

\vspace{4pt}\item $\{c_{1}=p_{1}=p_{2}=q_{1}=q_{2}=q_{3}=0\}$. In
this case, the action of $e_4$ on $\h$ is trivial. Hence, by
Proposition~\ref{4DLG} and Table~I, we find that
$\mathfrak{g=h}\times\mathbb{R}$, where $\mathfrak{h}$ is either
$\mathfrak{su}(2)$ or $\mathfrak{sl}(2)$.
\end{enumerate}

\smallskip\noindent\textbf{4): $\h=\g_4$.} By \eqref{g4} and \eqref{deriv},
the cyclic condition holds if and only if
\[
\alpha=2(\varepsilon-\beta), \quad c_{2}=p_{1},\quad c_{3}=-q_{1},\quad p_{3}=-q_{2}.
\]
Then, imposing the Jacobi identity and taking into account
Proposition~\ref{4DLG}, we have the following two non-isometric
cases:
\begin{enumerate}
\item $\{\beta=\varepsilon, c_{1}=0,p_{1}=\varepsilon
q_{1},q_{2}=\frac{\varepsilon }{2}\left(  p_{2}-q_{3}\right)\}$.
In this case, $\mathfrak{g=h}\rtimes\mathbb{R}$, where
$\mathfrak{h}=\mathfrak{n}_{3} =\mathrm{span}\{e_{1},e_{2}
,e_{3}\}$ is the Heisenberg Lie algebra,
$\mathbb{R}=\mathrm{span}\{e_{4}\}$ and the action is defined as
\begin{align}\label{4Lor}
\lbrack e_{1},e_{4}]  &=q_{1}(e_{2}-e_{3}), \nonumber \\
\lbrack e_{2},e_{4}]  &=q_{1}e_{1}+p_{2}e_{2}-q_{2}e_{3},\\
\lbrack e_{3},e_{4}]  &=q_{1}e_{1}+q_{2}e_{2}+q_{3}e_{3}, \nonumber
\end{align}
with $q_{2}=\frac{\varepsilon}{2}\left(  p_{2}-q_{3}\right)$.

\vspace{4pt}\item $\{c_{1}=p_{1}=p_{2}=q_{1}=q_{2}=q_{3}=0\}$, so
that $\mathfrak{g=h}\times\mathbb{R}$ trivially, and, taking into
account Proposition~\ref{4DLG},  $\h =
\mathfrak{sl}(2)$.
\end{enumerate}

Collecting all the above cases, we obtain the following.

\begin{theorem}\label{hLore}
Let $G=H \rtimes \mathbb R$ be a connected and simply connected
four-dimensional Lie group, equipped with a left-invariant
Lorentzian metric $g$, such that $g|_{H}$ is Lorentzian. If $g$ is
cyclic, then the Lorentzian Lie algebra $\h$ of $H$ admits a
pseudo-orthonormal basis $\{e_{1},e_{2},e_{3}\}$, with $e_3$
time-like, such that one of the following cases occurs:
\begin{description}
\item[I)] $G = E(1,1) \rtimes \mathbb R$  and one of the following holds:
\begin{itemize}
\item[(a)]
$\mathfrak{e}(1,1)\mathfrak{=}\mathrm{span}\{e_{1},e_{2},e_{3}\}$
is of the form $\g_1$ with $\beta=0$, and the action of
$\mathbb{R}=\mathrm{span}\{e_{4}\}$ on $\mathfrak{e}(1,1)$ is
described by \eqref{1Lor}.

\item[(b)]
$\mathfrak{e}(1,1)\mathfrak{=}\mathrm{span}\{e_{1},e_{2},e_{3}\}$
is of the form $\g_2$ with $\alpha=\beta=0$, and the action of
$\mathbb{R}=\mathrm{span}\{e_{4}\}$ on $\mathfrak{e}(1,1)$ is
described by \eqref{2Lor}.

\item[(c)]
$\mathfrak{e}(1,1)\mathfrak{=}\mathrm{span}\{e_{1},e_{2},e_{3}\}$
is of the form $\g_3$ with $\gamma=0$, and the action of
$\mathbb{R}=\mathrm{span}\{e_{4}\}$ on $\mathfrak{e}(1,1)$ is
described by \eqref{3.5Lor}.

\end{itemize}

\item[II)] $G = \widetilde{SL}(2,\mathbb R) \times \mathbb R$,
with $\mathbb{R}=\mathrm{span}\{e_{4}\}$ acting trivially  on
$\mathfrak{sl}(2)$
$\mathfrak{=}\mathrm{span}\{e_{1},e_{2},e_{3}\}$, and one of the
following holds:
\begin{itemize}
\vspace{4pt}\item[(a)] $\mathfrak{sl}(2)$ is of the form $\g_2$ with $\alpha=-2\beta \neq 0$.
\vspace{4pt}\item[(b)]$\mathfrak{sl}(2)$ is of the form $\g_3$ with $\alpha+\beta+\gamma=0$.
\vspace{4pt}\item[(c)]$\mathfrak{sl}(2)$ is of the form $\g_4$ with $\alpha=2(\varepsilon-\beta) \neq 0$.
\end{itemize}

\vspace{4pt}\item[III)] $G = \tilde{E}(2) \rtimes \mathbb R$,
where $\mathfrak{e}(2)=\mathrm{span}\{e_{1},e_{2},e_{3}\}$ is of
the form $\g_3$ with $\alpha+\gamma=\beta=0$, and the action of
$\mathbb{R}=\mathrm{span}\{e_{4}\}$ on $\mathfrak{e}(2)$ is
described by \eqref{3Lor}.

\vspace{4pt}\item[IV)] $G = \mathbb R^3 \rtimes \mathbb R$, where
$\mathbb R^3 \mathfrak{=}\mathrm{span}\{e_{1},e_{2},e_{3}\}$ and
the action of $\mathbb{R}=\mathrm{span}\{e_{4}\}$ on $\mathbb R^3$
is described by \eqref{1.1Lor}.

\vspace{4pt}\item[V)] $G = SU(2) \times \mathbb R$, where
$\mathfrak{su}(2)\mathfrak{=}\mathrm{span}\{e_{1},e_{2},e_{3}\}$
is of the form $\g_3$ with $\alpha+\beta+\gamma=0$, and the action
of $\mathbb{R}=\mathrm{span}\{e_{4}\}$ on $\mathfrak{su}(2)$ is
trivial.

\vspace{4pt}\item[VI)] $G = H_3 \rtimes \mathbb R$, where
$\mathfrak{n}_3 \mathfrak{=}\mathrm{span}\{e_{1},e_{2},e_{3}\}$ is
of the form $\g_4$ with $\alpha=\beta-\varepsilon=0$, and the
action of $\mathbb{R}=\mathrm{span}\{e_{4}\}$ on $\mathfrak{n}_3$
is described by~\eqref{4Lor}.

\end{description}
\end{theorem}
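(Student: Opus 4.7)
The plan is to follow the same pattern used in the proof of Theorem~\ref{hRie}, now starting from the normal form (b) of Proposition~\ref{gandg}. By Proposition~\ref{4DLG}, the three–dimensional factor $H$ is unimodular, so the Lorentzian Lie algebra $\mathfrak{h}=\mathrm{span}\{e_1,e_2,e_3\}$ (with $e_3$ time-like) must be one of $\g_1,\g_2,\g_3,\g_4$ from Theorem~\ref{Thom}. The space-like vector $e_4$ acts on $\mathfrak{h}$ as a derivation, so its brackets with $e_1,e_2,e_3$ have the general form \eqref{deriv}. Hence the unknowns are a subset of $\{\alpha,\beta,\gamma,\varepsilon\}$ together with the nine constants $c_i,p_i,q_i$, and the only constraints are the cyclic condition~\eqref{cyclicbase} and the Jacobi identity~\eqref{Jac}.

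I would treat the four possibilities for $\mathfrak{h}$ separately. In each case the cyclic identity splits into two groups of conditions: those coming from the triple $(e_1,e_2,e_3)$ recover exactly the three-dimensional cyclic conditions listed in Section~3 (namely $\beta=0$ for $\g_1$, $\alpha=-2\beta$ for $\g_2$, $\alpha+\beta+\gamma=0$ for $\g_3$, $\alpha=2(\varepsilon-\beta)$ for $\g_4$); and those involving $e_4$ collapse, using $\varepsilon_3=-1$, into the three identities $c_2=p_1$, $c_3=-q_1$, $p_3=-q_2$. Having eliminated $c_2,c_3,p_3$, one then expands \eqref{Jac} on the four triples $(e_1,e_2,e_4)$, $(e_1,e_3,e_4)$, $(e_2,e_3,e_4)$ (the triple $(e_1,e_2,e_3)$ is automatic since $\mathfrak{h}$ is a Lie algebra) and solves the resulting polynomial system in the remaining six unknowns $c_1,p_1,p_2,q_1,q_2,q_3$.

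Each of the four subcases yields a short list of solution families, which by direct inspection match the 3D unimodular Lie algebras in Table~I (via the restrictions on $\alpha,\beta,\gamma$) and therefore identify $H$ as one of $E(1,1)$, $\widetilde{SL}(2,\mathbb R)$, $\tilde E(2)$, $SU(2)$, $H_3$ or $\mathbb R^3$. The cases in which all the $c_i,p_i,q_i$ vanish correspond to a trivial derivation, giving direct products $H\times\mathbb R$; the remaining ones give genuine semidirect products whose actions are precisely \eqref{1Lor}, \eqref{2Lor}, \eqref{3Lor}, \eqref{3.5Lor} and \eqref{4Lor}. After these identifications the cases I)--VI) of the statement can simply be read off.

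The main obstacle is the bookkeeping in step two: the Jacobi system is an overdetermined quadratic system in six unknowns and, in the $\g_3$ and $\g_4$ branches, splits into several incomparable solution components, some of which are equivalent up to relabeling the space-like generators $e_1,e_2$ or degenerate into lower-dimensional ones already found in the $\g_1$ or $\g_2$ branches. Eliminating such duplications, and checking at each stage that the resulting three-dimensional algebra still satisfies the cyclicity constraint on $(\alpha,\beta,\gamma)$ imposed above, is what determines the final minimal list. Everything else reduces to routine verification.
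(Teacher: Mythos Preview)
Your proposal is correct and follows essentially the same approach as the paper: both treat the four unimodular Lorentzian forms $\g_1$--$\g_4$ of $\mathfrak h$ separately, derive from the cyclic condition exactly the three-dimensional constraint of Section~3 together with $c_2=p_1$, $c_3=-q_1$, $p_3=-q_2$, and then solve the Jacobi system for the remaining derivation coefficients, identifying $H$ via Tables~I--II and Proposition~\ref{4DLG}. Your remark about eliminating duplicate branches (e.g.\ isometric cases under $e_1\leftrightarrow e_2$, or degenerate parameter values falling into earlier cases) mirrors precisely what the paper does in the $\g_3$ and $\g_4$ analyses.
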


\subsection{Third case: $\mathfrak{h}$ degenerate}

We now assume that the restriction of the metric $g$ on
$\mathfrak{h}$ is degenerate. It is enough to restrict to the case
when the derived algebra is the full subalgebra $\mathfrak{h}$,
that is,
\[
\g' =\lbrack \mathfrak{g},\mathfrak{g}]=\mathfrak{h}.
\]
In fact, if dim$\g '<3$, then there are at least two linearly
independent vectors acting as derivations in $\mathfrak{g}$. Since
$\mathfrak{g}$ is Lorenztian, the subspace spanned by these two
vectors cannot be completely null \cite{O'N} and so, we can pick a
derivation that is either space-like or time-like. Henceforth, we
are in one of the non-degenerate situations already studied in the
previous subsections.

We shall now investigate the different possibilities, compatible
with condition $\g '=\mathfrak{h}$, determined by the dimension of
the derived algebra
$\mathfrak{h}^{\prime}=[\mathfrak{h},\mathfrak{h}]$ of $\h$.

\medskip\noindent
{\bf dim$\mathfrak{h}^{\prime}=0$.}  In this case, $\h=\mathbb
R^3$ is abelian. As the only non-vanishing  Lie brackets are given
by \eqref{deriv} and $\h=\g '$ is abelian, the Jacobi identity
holds trivially. Moreover, the metric $g$ is cyclic if and only if
$c_2=p_1, q_1=q_2=0$. Therefore, the Lie algebra is completely
described by

\begin{equation}\label{0deg}
\lbrack e_{1},e_{4}]=c_{1}e_{1}+p_{1}e_{2}+c_{3}e_{3},\; \lbrack e_{2},e_{4}]=p_{1}e_{1}+p_{2}e_{2}+p_{3}e_{3},\;
\lbrack e_{3},e_{4}]=q_{3}e_{3}.
\end{equation}

\medskip\noindent
{\bf dim$\mathfrak{h}^{\prime}=1$.}  Then,
$\mathfrak{h}=\mathfrak{n}_3$ is the three-dimensional Heisenberg
Lie algebra and so, $\mathfrak{h}^{\prime}= {\rm span}(X)$.

As it follows from case (c) in Proposition~\ref{4DLG}, $g|_{\h}$
has signature $(2,0,1)$. Thus, we can write $X=V+\lambda e_{3}$,
where $V$ is spacelike and $e_{3} \perp V$ is null. We have the
following two possibilities.

\smallskip{\bf (a): $V\neq0$.}

We consider $e_{1}=X/\left\Vert X\right\Vert $ (space-like) and
complete the basis of $\mathfrak{h}$ with another space-like unit
vector $e_{2}$ and the null vector $e_{3}$, so that
\[
g|_{\mathfrak{g}_{3}}=\left(
\begin{array}
[c]{ccc}%
1 & 0 & 0\\
0 & 1 & 0\\
0 & 0 & 0
\end{array}
\right)  ,\qquad\left\{
\begin{array}
[c]{c}
\lbrack e_{1},e_{2}]=\alpha e_{1},\\
\lbrack e_{1},e_{3}]=\beta e_{1},\\
\lbrack e_{2},e_{3}]=\mu e_{1}.
\end{array}
\right.
\]
Imposing the cyclic condition, we get
\begin{equation}\label{eq1}
\mu=0,\quad c_{2}=p_{1},\quad q_{1}=0,\quad q_{2}=0.
\end{equation}
Next, we apply the Jacobi identity \eqref{Jac} and find the
following four possible solutions:

\begin{itemize}
\item $\{c_{3}=p_{1}=q_{3}=0,p_2\alpha=-p_{3}\beta \}$. Taking
into account \eqref{eq1}, We have
\begin{equation}\label{yy}
\begin{array}{llll}
\lbrack e_{1},e_{2}] =\alpha e_{1}, & \lbrack e_{1},e_{3}] =\beta e_{1}, & \lbrack e_{2},e_{3}] =0, &  \\[4pt]
\lbrack e_{1},e_{4}]=c_{1}e_{1}, & \lbrack e_{3},e_{4}]=0, & \lbrack e_{2},e_{4}]=p_{2}e_{2}+p_{3}e_{3}, & p_2\alpha+p_{3}\beta =0.
\end{array}
\end{equation}

\item $\{\alpha=\beta=0\}$. But since $\mu=0$ by \eqref{eq1}, this
case would contradict dim$\mathfrak{h}^{\prime}=1$ and so, it does
not occur.

\item $\{c_{3}=p_{1}=p_{2}=p_{3}=q_{3}=0\}$. Then, by \eqref{eq1},
we would conclude that $\dim[\mathfrak{g,g]}<3$, against our
assumption.

\item $\{\beta=c_{3}=p_{1}=p_{2}=0\}$, which, taking into account
\eqref{eq1}, contradicts again $\dim[\mathfrak{g,g]}=3$.
\end{itemize}

\smallskip{\bf (b): $V=0$.}

We can then choose an orthogonal basis $\{e_{1},e_{2},e_{3}\}$ of
$\h$, such that
\[
g|_{\mathfrak{g}_{3}}=\left(
\begin{array}
[c]{ccc}%
1 & 0 & 0\\
0 & 1 & 0\\
0 & 0 & 0
\end{array}
\right)  ,\qquad\left\{
\begin{array}
[c]{c}%
\lbrack e_{1},e_{2}]=\alpha e_{3},\\
\lbrack e_{1},e_{3}]=\beta e_{3},\\
\lbrack e_{2},e_{3}]=\mu e_{3}.%
\end{array}
\right.
\]
Imposing the cyclic condition we get%
\[
\mu=0, \quad c_{2}=p_{1}+\alpha, \quad q_{2}=\mu=0, \quad q_{1}=-\beta
\]
and applying the Jacobi identity we have the following possible solutions:

\begin{itemize}
\item $\{\beta=0, c_{1}=-p_{2}+q_{3}\}$. Then, we have%
\begin{equation}\label{yyy}
\begin{array}{ll}
\lbrack e_{1},e_{2}]=\alpha e_{3}, \quad & \lbrack e_{1},e_{4}]=(q_{3}-p_{2})e_{1}+(p_{1}+\alpha)e_{2}+c_{3}e_{3},\\
\lbrack e_{1},e_{3}]=0, \quad & \lbrack e_{2},e_{4}]=p_{1}e_{1}+p_{2}e_{2}+p_{3}e_{3},\\
\lbrack e_{2},e_{3}]=0, \quad &  \lbrack e_{3},e_{4}]=q_{3}e_{3}.%
\end{array}
\end{equation}
\item $\{\alpha=\beta=0\}$. But since $\mu=0$, this contradicts
dim$\mathfrak{h}^{\prime}=1$ and so, it cannot occur.

\end{itemize}

\medskip\noindent
{\bf dim$\mathfrak{h}^{\prime}=2$.} Thus, either
$\mathfrak{h}=\mathfrak{e} (1,1)$ or
$\mathfrak{h}=\mathfrak{e}(2)$.

Taking into account the signature of $g|_{\h}$ as in the previous
case, we now have
$\mathfrak{h}^{\prime}=\mathrm{span}\{X_{1},X_{2}\}$, where
$X_{i}=V_{i}+\lambda_{i}e_{3}$, with $V_{i}$ space-like and
$e_{3}$ null and orthogonal to $V_1,V_2$. We consider the
following subcases.

\smallskip{\bf (a): $V_{1}$ and $V_{2}$ are linearly independent.}

Since $V_{1},V_2$ are space-like, there exist orthonormal vectors $e_{1}$ and $e_{2}$, such that $\h^\prime =\mathrm{span}%
\{X_{1},X_{2}\}=\mathrm{span}\{e_{1},e_{2}\}$. With respect to the basis $\{e_{1},e_{2},e_{3}\}$ of $\h$, we then have%
\[
g|_{\mathfrak{g}_{3}}=\left(
\begin{array}
[c]{ccc}%
1 & 0 & 0\\
0 & 1 & 0\\
0 & 0 & 0
\end{array}
\right)  ,\qquad\left\{
\begin{array}{l}%
\lbrack e_{1},e_{2}]=a_{1}e_{1}+a_{2}e_{2},\\
\lbrack e_{1},e_{3}]=b_{1}e_{1}+b_{2}e_{2},\\
\lbrack e_{2},e_{3}]=t_{1}e_{1}+t_{2}e_{2}.%
\end{array}
\right.
\]
Imposing the cyclic condition, we find%
\[
b_{2}=t_{1}, \quad c_{2}=p_{1}, \quad q_{1}=0, \quad q_{2}=0.
\]
However, when we apply the Jacobi identity, all the solutions we
get turn out to be incompatible with either dim$\g'=3$ or dim$\h
'=2$. For example, one of such solutions is given by
$$\{b_{1}=0,c_{1} a_{2}^{2}=p_{2}a_{1}^{2},p_{1}a_2=p_{2}a_1,p_{3}a_2=-a_{1}c_{3},t_{1}=0,t_{2}=0\}.$$
But then, $\lbrack e_{1},e_{3}]=\lbrack e_{2},e_{3}]=0$,
contradicting the fact that dim$\mathfrak{h}^{\prime}=2$. So, this
case does not occur.

\smallskip{\bf (b): $V_{1}$ and $V_{2}$ are linearly dependent.}

Then, we can choose $\{V_1,e_{3}\}$ as a basis for
$\mathfrak{h}^{\prime}$. We  consider $e_{1}=V_1/\left\Vert
V_1\right\Vert $, and a
space-like vector $e_{2}$, orthogonal to both  $e_{1}$ and $e_{3}$, so that we have%
\[
g|_{\mathfrak{g}_{3}}=\left(
\begin{array}
[c]{ccc}%
1 & 0 & 0\\
0 & 1 & 0\\
0 & 0 & 0
\end{array}
\right)  ,\qquad\left\{
\begin{array}{l}
\lbrack e_{1},e_{2}]=a_{1}e_{1}+ a_{3}e_{3},\\
\lbrack e_{1},e_{3}]=b_{1}e_{1}+ b_{3}e_{3},\\
\lbrack e_{2},e_{3}]=t_{1}e_{1}+ t_{3}e_{3}.
\end{array}
\right.
\]
Imposing the cyclic condition, we get
\[
t_{1}=0, \quad c_2= a_{3}+p_{1}, \quad q_1=-b_{3}, \quad
q_{2}=-t_{3}.
\]
Also in this case, the Jacobi identity does not provide any
solutions compatible with dim$\g^\prime=3$ and dim$\h^\prime =2$.
Therefore, this case cannot occur.

\medskip\noindent
{\bf dim$\mathfrak{h}^{\prime}=3$.}

As above, we consider that $e_{3}\in\mathfrak{h}$ is orthogonal to
$\mathfrak{h}$ itself. Since $\mathfrak{h}^{\prime}=\mathfrak{h}$,
we have either $\mathfrak{h}=\mathfrak{sl}(2)$ or
$\mathfrak{h}=\mathfrak{su}(2)$. In order to distinguish these two
cases, we consider
$\mathrm{ad}_{e_{3}}:\mathfrak{h}\rightarrow\mathfrak{h}$, which,
since $\mathfrak{h}^{\prime}=\mathfrak{h}$, is necessarily of rank
$2$. Besides $0$, $\mathrm{ad}_{e_{3}}$ has either two real
eigenvalues or two conjugate complex eigenvalues. In addition, if
we write $e_{3}=[X_{1},X_{2}]$,
we have%
\[
\mathrm{ad}_{e_{3}}=\mathrm{ad}_{X_{1}}\circ\mathrm{ad}_{X_{2}}-\mathrm{ad}%
_{X_{2}}\circ\mathrm{ad}_{X_{1}}
\]
so that $\mathrm{tr}(\mathrm{ad}_{e_{3}})=0$. We thus have the
following possible cases.

\smallskip{\bf (a): Eigenvalues of $\mathrm{ad}_{e_{3}}$ are $0,\lambda \neq 0$ and
$-\lambda$.}

We choose $e_{1}$ and $e_{2}$ (unitary) eigenvectors, that is,
$[e_{3} ,e_{1}]=\lambda e_{1}$, $[e_{3},e_{2}]=-\lambda e_{2}$.
The Jacobi identity (rescaling $e_{3}$ if needed) gives
$[e_{2},e_{1}]=e_{3}$. With respect to $\{e_1,e_2,e_3\}$, the
metric is given by
\[
g|_{\mathfrak{h}}=\left(
\begin{array}
[c]{ccc}
1 & k & 0\\
k & 1 & 0\\
0 & 0 & 0
\end{array}
\right)  .
\]
Imposing the cyclic condition, we then find
$$\left\{\begin{array}{l}
2k\lambda  =0,\\
q_{1}+kq_{2}   =0,\\
kq_{1}+q_{2}   =0,\\
1+kc_{1}-p_{1}+c_{2}-kp_{2}  =0,
\end{array}\right.
$$
which, since $\lambda \neq 0$, easily reduces to $k=q_{1}=q_{2}=0,
p_{1}=1+c_{2}$.

Imposing the Jacobi identity to $\mathfrak{g}$, we get
$c_{1}=-p_{2}+q_{3}$ and $\lambda=0$, which is a contradiction.
Hence, this case cannot occur.

\smallskip{\bf (b): Eigenvalues of $\mathrm{ad}_{e_{3}}$ are $0,i\beta$ and
$-i\beta$, with $\beta \neq 0 $.}

We choose $e_{1}$ and $e_{2}$ (unitary) Jordan vectors, that is,
$[e_{3} ,e_{1}]=\beta e_{2}$, $[e_{3},e_{2}]=-\beta e_{1}$. The
Jacobi identity (rescaling $e_{3}$ if needed) then gives
$[e_{1},e_{2}]=\beta e_{3}$, and the metric is described by
\[
g|_{\mathfrak{h}}=\left(
\begin{array}
[c]{ccc}%
1 & k & 0\\
k & 1 & 0\\
0 & 0 & 0
\end{array}
\right)  .
\]
Imposing the cyclic condition for $e_{1}$, $e_{2}$ and $e_{3}$, we have
\[
0=g([e_{1},e_{2}],e_{3})+g([e_{2},e_{3}],e_{1})+g([e_{3},e_{1}],e_{2})=2\beta
\]
which is not admissible. Therefore, this case does not occur.

Collecting all the above cases, we obtain the following.

\begin{theorem}\label{hDeg}
Let $G=H \rtimes \mathbb R$ be a connected, simply connected
four-dimensional Lie group, equipped with a left-invariant
Lorentzian metric $g$, such that $g|_{H}$ is degenerate. If $g$ is
cyclic, then we can choose a basis $\{e_1,e_2,e_3,e_4\}$ of the
Lie algebra $\g = \h \rtimes \mathbb{R}$, such $\h =
\mathrm{span}(e_1,e_2,e_3)$, with respect to $\{e_i\}$ the the
metric is described as in case~(c) of
Proposition~{\em\ref{gandg}}, and one of
 the following cases occurs:
 \begin{description}
\item[I)] $G = \mathbb{R}^3 \rtimes \mathbb R$, with brackets as in
\eqref{0deg}.

\vspace{4pt}\item[II)] $G=H_3\rtimes \mathbb{R}$ with brackets either as in
\eqref{yy} or as in \eqref{yyy}.
\end{description}
\end{theorem}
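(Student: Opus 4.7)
The plan is to stratify by the dimension of the derived subalgebra $\h' = [\h,\h]$, after first recording the reduction $\g' = [\g,\g] = \h$: if $\dim \g' < 3$, then the kernel of $\mathrm{ad}_{e_4}$ meets $\h$ in at least a two-dimensional subspace; since $g$ is Lorentzian, any such subspace contains a non-null vector, which can be added to $e_4$ (as in the proof of Proposition~\ref{gandg}) to exhibit $\g$ as a semidirect product by a non-null derivation, reducing to Theorems~\ref{hRie} or \ref{hLore}. Under $\g' = \h$, $\dim \h' \in \{0,1,2,3\}$ determines $\h$ up to isomorphism, and in each stratum I would choose a basis $\{e_1,e_2,e_3\}$ adapted to the signature $(2,0,1)$ of $g|_\h$ (so that $g|_\h$ has the canonical form of case (c) in Proposition~\ref{gandg}), write the general $\h$-brackets compatible with the isomorphism class of $\h$ together with the derivation brackets \eqref{deriv}, and then impose the cyclic relation \eqref{cyclicbase} followed by the Jacobi identity \eqref{Jac}.

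The case $\dim \h' = 0$ is immediate, because $\h = \mathbb{R}^3$ makes Jacobi vacuous; only the cyclic condition remains, forcing $c_2 = p_1$ and $q_1 = q_2 = 0$ and yielding case I with brackets \eqref{0deg}. For $\dim \h' = 1$ I would write the generator of $\h'$ as $X = V + \lambda e_3$ with $V$ spacelike and $e_3$ null and orthogonal to $V$, splitting into $V \neq 0$ (take $e_1 = X/\|X\|$ and complete the basis) and $V = 0$ (so that $\h' = \mathrm{span}(e_3)$). In each subcase the $\h$-brackets depend on only a few scalars, the cyclic condition cuts these down, and inspection of the Jacobi branches leaves exactly the bracket families \eqref{yy} and \eqref{yyy}, all of which are Heisenberg, giving case II.

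The hard part is showing that $\dim \h' \in \{2,3\}$ admits no cyclic examples. For $\dim \h' = 2$ I would again decompose $\h' = \mathrm{span}(X_1,X_2)$ according to whether the spacelike parts $V_1,V_2$ are linearly independent or collinear; in the independent subcase one can orthonormalize to $\h' = \mathrm{span}(e_1,e_2)$, and in the collinear subcase one takes $\h' = \mathrm{span}(e_1,e_3)$ with $e_3$ null. In both subcases a careful but essentially mechanical analysis of the resulting polynomial system (cyclic condition plus Jacobi) produces only solutions that either kill $[e_1,e_3]$ and $[e_2,e_3]$ simultaneously (contradicting $\dim \h' = 2$) or force $\dim \g' < 3$. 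For $\dim \h' = 3$ one uses that $\mathrm{ad}_{e_3}$ has rank $2$ and trace $0$ because $e_3 = [X_1,X_2]$: either its nonzero spectrum is $\{\pm \lambda\}$ real, and the Jacobi identity collapses $\lambda$ to $0$, or it is $\{\pm i\beta\}$, and a direct computation gives $\mathfrak{S}\, g([e_i,e_j],e_k) = 2\beta$ on $\{e_1,e_2,e_3\}$, contradicting \eqref{cyclicbase3D}. Collecting the cases that survive this elimination yields exactly I and II of the statement.
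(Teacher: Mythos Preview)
Your overall plan---reducing to $[\g,\g]=\h$, stratifying by $\dim\h'\in\{0,1,2,3\}$, and in each stratum choosing a basis adapted to the signature $(2,0,1)$ before imposing the cyclic relation and then Jacobi---is exactly the paper's approach, and your sketches of the four strata (including the decomposition $X=V+\lambda e_3$ for $\dim\h'=1$, the independent/collinear split for $\dim\h'=2$, and the trace--rank analysis of $\mathrm{ad}_{e_3}$ for $\dim\h'=3$) match the paper's computations essentially verbatim.

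The reduction step, however, does not work as you have written it. The assertion that $\ker(\mathrm{ad}_{e_4})\cap\h$ is at least two-dimensional whenever $\dim\g'<3$ is false: take $\h=\mathbb{R}^3$ abelian with $\mathrm{ad}_{e_4}|_\h$ of rank~$2$; then $\dim\g'=2$ but $\ker(\mathrm{ad}_{e_4}|_\h)$ is one-dimensional, and in the degenerate frame of case~(c) it can even be the null line $\mathrm{span}(e_3)$. More seriously, even if you did locate a non-null $w\in\h$ commuting with $e_4$, replacing $e_4$ by $e_4+w$ is exactly the move from the proof of Proposition~\ref{gandg} and it leaves the ideal $\h$ unchanged, so $g|_\h$ is still degenerate and you have not escaped case~(c). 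The paper's reduction is different: when $\dim\g'\leq 2$, \emph{every} three-dimensional subspace of $\g$ containing $\g'$ is an ideal, so there is at least a two-parameter family of admissible choices of $\h$ (equivalently, of complementary ``derivation'' directions). Since a two-dimensional subspace of a Lorentzian space cannot be totally null, one can choose a non-null derivation direction $v$; concretely, $(\g')^\perp$ has dimension $\geq 2$, hence contains a non-null $v$, and then $v^\perp\supset\g'$ is a non-degenerate three-dimensional ideal, placing $\g$ in the setting of Theorem~\ref{hRie} or Theorem~\ref{hLore}.
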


\section{Final remarks}

\subsection{Homogeneous manifolds with homogeneous structures in $\mathcal{S}_3$ and in $\mathcal{S}_1 \oplus
\mathcal{S}_2$.}

We consider the question whether a homogeneous manifold can admit
homogeneous structures both in $\mathcal{S}_3$ and in
$\mathcal{S}_1 \oplus \mathcal{S}_2$.

If we require that {\em the same} homogeneous structure $S$
belongs to both $\mathcal{S}_3$ and $\mathcal{S}_1 \oplus
\mathcal{S}_2$, then it means that $S=0$, that is, the manifold is
symmetric, and conversely.

Observe that for a metric Lie group $G$, equipped with a
left-invariant pseudo-Riemannian metric $g$, we are considering a
specific homogeneous structure $\tilde S$, namely, the one giving
to it the Lie group structure ($G$ acting transitively on itself
by isometries). Thus, the fact that such a structure belongs to
both $\mathcal{S}_3$ and $\mathcal{S}_1\oplus \mathcal{S}_2 $ is
equivalent to require  that $(G,g)$ is a symmetric Lie group.

On the other hand, for example, it follows from
Theorem~\ref{hLore} that the homogeneous structure $\tilde S$ of
$SU(2) \times \mathbb R$ belongs to $\mathcal{S}_1 \oplus
\mathcal{S}_2$, since the left-invariant metric $g$ is cyclic. At
the same time, $SU(2)$ is a non-symmetric naturally reductive
homogeneous Lorentzian manifold \cite[Theorem~4.3]{CM}.
Consequently, being the (non-symmetric) direct product of
naturally reductive manifolds, four-dimensional Lorentzian Lie
group $SU(2) \times \mathbb R$ also admits a (non-trivial)
homogeneous structure $S \in \mathcal{S}_3 $.

\subsection{Three-dimensional cotorsionless Lorentzian manifolds.}

We already recalled in Section 3 that all connected, simply
connected homogeneous Lorentzian three-manifolds can be realized
as Lorentzian Lie groups, with the only exception of $\mathbb S^2
\times \mathbb R$ with the product metric $g=g_{\mathbb
S^2}-dt^2$. It is obvious that as a product of symmetric spaces,
$\mathbb S^2 \times \mathbb R$ is again symmetric and so, it is
(trivially) a cotorsionless manifold. With regard to all
homogeneous structures on $\mathbb S^2 \times \mathbb R$, it is
possible to check by direct calculation that they are parametrized
by one parameter, and the only tensor belonging to $\mathcal{S}_1
+ \mathcal{S}_2$ is $S=0$.

The next result then follows from the above observations about
$\mathbb S^2 \times \mathbb R$ and  the classification of
three-dimensional cyclic Lorentzian Lie groups given in
Theorem~\ref{cyclic3D}.

\begin{theorem}
A three-dimensional connected, simply connected cotorsionless
homogeneous Lorentzian manifold is either isometric to $\mathbb
S^2 \times \mathbb R$, or to one of the cyclic Lorentzian Lie
groups classified in Theorem~{\em\ref{cyclic3D}}.
\end{theorem}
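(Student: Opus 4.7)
The plan is to combine the structural dichotomy recalled at the beginning of Section~3 with the already-established classification of cyclic Lorentzian Lie groups. By Theorem~\ref{Thom} of \cite{C1}, a connected, simply connected homogeneous Lorentzian three-manifold is either symmetric or isometric to a Lie group with a left-invariant Lorentzian metric, and the accompanying remark noted that, except for $\mathbb{S}^2\times\mathbb{R}$, every three-dimensional Lorentzian symmetric space admits a realization as a Lorentzian Lie group.

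First I would handle the exceptional case $M\cong\mathbb{S}^2\times\mathbb{R}$. Being a product of symmetric spaces, $\mathbb{S}^2\times\mathbb{R}$ is itself symmetric, so the trivial tensor $S=0$ is a homogeneous structure lying in $\mathcal{S}_1\oplus\mathcal{S}_2$, which certifies that $\mathbb{S}^2\times\mathbb{R}$ is cotorsionless. This matches the first option in the statement.

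In the remaining case, $(M,g)$ is isometric to some Lorentzian Lie group $(G,g)$, which carries the distinguished homogeneous structure $\tilde S_x y=\nabla_x y$ described in Section~2. By Theorem~\ref{S-i}, membership $\tilde S\in\mathcal{S}_1\oplus\mathcal{S}_2$ is equivalent to the cyclic identity~\eqref{cyclic} for $g$, so the cotorsionless hypothesis, interpreted via the canonical Lie-group structure, becomes exactly the cyclic condition; Theorem~\ref{cyclic3D} then identifies $(G,g)$ with one of the groups in its final list.

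The delicate point is that the Gadea--Oubi\~na notion of cotorsionless requires only \emph{some} homogeneous structure in $\mathcal{S}_1\oplus\mathcal{S}_2$, not necessarily the canonical $\tilde S$ coming from the Lie group structure; as observed on $\mathbb{S}^2\times\mathbb{R}$ itself, several inequivalent homogeneous structures may coexist on the same manifold. The argument above thus tacitly relies on the fact that, up to isometry, any three-dimensional cotorsionless Lorentzian Lie group can be presented with its canonical structure cyclic. This is the main technical obstacle, and it is handled by a direct inspection of the explicit models listed in Theorem~\ref{Thom}: in each case one verifies that the cyclic reformulation already leads to one of the representatives in Theorem~\ref{cyclic3D}, so that no further examples escape the classification.
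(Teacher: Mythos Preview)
Your argument mirrors the paper's own brief justification: invoke the dichotomy from Theorem~\ref{Thom}, handle $\mathbb{S}^2\times\mathbb{R}$ as a symmetric space with the trivial structure $S=0$, and reduce the Lie-group case to Theorem~\ref{cyclic3D}. You are in fact more scrupulous than the paper in flagging the distinction between \emph{some} homogeneous structure lying in $\mathcal{S}_1\oplus\mathcal{S}_2$ and the \emph{canonical} Lie-group structure doing so---the paper's one-line derivation does not raise this point at all, so your treatment is at least as complete as the original.
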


\subsection{Relating three- and four-dimensional cyclic  Lie groups}
As proved in \cite{GGO}, a three-dimensional Riemannian Lie group
$H$ is cyclic if and only if its Lie algebra is of the form
\eqref{3DRie} with $a_1+a_2+a_3$. Moreover, by direct calculation
(see also the proof of Theorem~6.2 in \cite{GGO}), we see that if
$(G=H \rtimes \mathbb R,g)$ is a four-dimensional cyclic
Riemannian Liegroup, then $(H,g_{\h})$ is again cyclic.

With regard to cyclic Lorentzian metrics, by Theorem~\ref{hRie} we
see that if $(G=H \rtimes \mathbb R,g)$ is a four-dimensional
cyclic Lorentzian Lie group, with $H$ Riemannian, then
$(H,g_{\h})$ is cyclic. Similarly, Theorems~\ref{cyclic3D} and
\ref{hLore} show that if $(G=H \rtimes \mathbb R,g)$ is a
four-dimensional cyclic Lorentzian Lie group and $H$ is
Lorentzian, then $(H,g_{\h})$ is cyclic.

Hence, when $g_{\h}$ is either Riemannian or Lorentzian,
left-invariant cyclic Lorentzian metrics on four-dimensional Lie
groups can be interpreted as semi-direct product extensions of
corresponding cyclic metrics on three-dimensional Lie algebras.
But clearly, the examples listed in Theorem~\ref{hDeg} do not show
such a correspondence, because for them $g_{\h}$ is degenerate.

So, we see once more that geometric behaviours occurring in
Lorentzian settings are richer that their Riemannian analogues:
four-dimensional Riemannian cyclic metrics are semi-direct product
extensions of three-dimension\-al Riemannian cyclic metrics, while
not all four-dimensional Lorentzian cyclic metrics arise from a
corresponding construction.

\subsection{Compact homogeneous solvmanifolds from cyclic Lie
groups}

From the classification results obtained in Section~4, all
four-dimensional simply connected Lorentzian cyclic Lie groups $G$
are non compact. One could ask about the existence compact
Lorentzian cotorsionless manifolds by considering quotients
$G/\Gamma$ by an appropriate lattice subgroup $\Gamma\subset G$.
This is precisely the way compact homogeneous solvmanifolds or
nilmanifolds are constructed. However, the following results holds
(for arbitrary dimension of $G$).

\begin{proposition}
Let $M=G/\Gamma$ be a compact pseudo-Riemannian homogeneous
solvmanifold (in particular, a nilmanifold) given by the quotient
of the right action of a lattice $\Gamma$ in a solvable (in
particular, nilpotent) Lie group $G$. We assume that $G$ is
equipped with a left-invariant metric $g$\ such that the
projection $\pi:G\rightarrow M$ is a local isometry. Then, the
metric $g$ is also right-invariant, the group is naturally
reductive and the homogeneous structure associated to $g$ belongs
to of class $\mathcal{S}_{3}$.

Consequently, the only possible cyclic homogeneous structure for
$G$ is the trivial one and occurs when $M$ is locally symmetric.
\end{proposition}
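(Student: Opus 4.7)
The plan is to reduce everything to the orthogonality of the decomposition $\mathcal{S}(V) = \mathcal{S}_1(V) \oplus \mathcal{S}_2(V) \oplus \mathcal{S}_3(V)$ of Theorem~\ref{S-i}, by first proving that the standing hypotheses force the left-invariant metric $g$ to be bi-invariant. The very first step is to translate the requirement that $\pi : G \to M$ be a local isometry into an algebraic condition on $\g$. Since $\Gamma$ acts on $G$ on the right and $g$ is left-invariant, $g$ descends to a pseudo-Riemannian metric on $M = G/\Gamma$ precisely when $R_\gamma^{\,*} g = g$ for every $\gamma \in \Gamma$; under the identification of a left-invariant metric with the inner product $g_e$ on $\g = T_e G$, this amounts to $\mathrm{Ad}(\gamma) \in O(\g, g_e)$ for every $\gamma \in \Gamma$.

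The step I expect to be the main obstacle is to upgrade this from the discrete subgroup $\Gamma$ to the whole connected group $G$, that is, to deduce $\mathrm{Ad}(g) \in O(\g, g_e)$ for all $g \in G$. In the nilpotent case I would appeal to Mal'cev rigidity: the image $\mathrm{Ad}(\Gamma)$ is Zariski-dense in $\mathrm{Ad}(G)$, because $\log \Gamma$ spans $\g$ rationally, and since the pseudo-orthogonal group is algebraic the containment $\mathrm{Ad}(\Gamma) \subset O(\g, g_e)$ automatically propagates to $\mathrm{Ad}(G)$. In the general solvable case an analogous density statement, going back to Mostow's structure theory for compact solvmanifolds, yields the same conclusion; so in both situations $\mathrm{Ad}(G) \subset O(\g, g_e)$ and $g$ is right-invariant as well.

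Once bi-invariance of $g$ is established, the rest is formal and follows from the material recalled in Section~2. Bi-invariance is equivalent to the canonical homogeneous structure $S_x y = \nabla_x y$ belonging to $\mathcal{S}_3(V)$; explicitly $S_x y = \tfrac{1}{2}[x,y]$, which exhibits $G$ as naturally reductive with the $(-)$-Cartan-Schouten connection. Because the decomposition in Theorem~\ref{S-i} is \emph{orthogonal}, the only tensor lying both in $\mathcal{S}_3(V)$ and in $\mathcal{S}_1(V) \oplus \mathcal{S}_2(V)$ is $S = 0$. Hence the cyclic condition forces $S = 0$, whence $\tilde\nabla = \nabla$; the identity $\tilde\nabla R = 0$ from Definition~\ref{dd} collapses to $\nabla R = 0$, so $M$ is locally symmetric, and the converse is trivial since the zero structure lies in every class.
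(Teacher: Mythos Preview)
Your route is different from the paper's: the paper obtains bi-invariance in one line by invoking Zeghib's classification of compact homogeneous Lorentzian spaces \cite{Ze}, and then concludes exactly as you do via $\mathcal{S}_3\cap(\mathcal{S}_1\oplus\mathcal{S}_2)=\{0\}$. Your direct density argument, if it could be made to work, would even have the advantage of being signature-independent (the statement says ``pseudo-Riemannian'' while Zeghib's theorem is Lorentzian).

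The nilpotent step via Mal'cev rigidity is correct, but the ``analogous density statement'' you invoke for general solvable $G$ is false. Take $G=\widetilde{E}(2)=\mathbb{R}^2\rtimes\mathbb{R}$, with $\mathbb{R}$ acting by rotations, and the cocompact lattice $\Gamma=\langle(e_1,0),\,(e_2,0),\,(0,2\pi)\rangle$. Then $\mathrm{Ad}(G)\cong E(2)=\mathbb{R}^2\rtimes SO(2)$, but $(0,2\pi)$ is central, so $\mathrm{Ad}(\Gamma)$ sits entirely in the unipotent radical and its Zariski closure is $\mathbb{R}^2\subsetneq\mathrm{Ad}(G)$. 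Mostow's structure theory for compact solvmanifolds controls the nilradical direction but gives no Zariski density through the compact part of the adjoint torus. In this particular three-dimensional example the hypothesis of the proposition turns out to be vacuous---the unipotent constraint already forces $g$ to vanish on the plane spanned by the two translation directions, which is impossible for a nondegenerate form in dimension three---but your argument does not see this and rests on a general density claim that fails. To close the solvable case you would need either a separate treatment of the toral part of $\mathrm{Ad}(G)$, or the classification result the paper cites.
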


\begin{proof}
The bi-invariance follows from the classification of homogeneous
compact Lorentzian spaces obtained in \cite{Ze}. From here, the
only cyclic homogeneous structure is the trivial one and hence $G$
(and $M$) is locally symmetric.
\end{proof}

\end{document}